\documentclass{article}
\usepackage{authblk}
\usepackage{xspace}
\bibliographystyle{plain}




\usepackage[a4paper,outer=3cm]{geometry}

\marginparwidth  2cm
\headsep 2cm

\usepackage{times}

\usepackage{xspace}

\usepackage{longtable}
\usepackage{amsmath,amsthm,amssymb,amsthm, stmaryrd}
\usepackage{enumerate}

\usepackage{algorithm}
\usepackage{algpseudocode}
\algnewcommand\algorithmicinput{\textbf{Input:}}
\algnewcommand\algorithmicoutput{\textbf{Output:}}
\algnewcommand\Input{\item[\algorithmicinput]}
\algnewcommand\Output{\item[\algorithmicoutput]}

\usepackage{hyperref}

\usepackage{subcaption}
\usepackage{pgf,tikz}
\usepackage{svg}
\usetikzlibrary{arrows}
\usetikzlibrary{decorations.markings}

\usepackage{pdftricks}
\begin{psinputs}
	\usepackage{pstricks,pst-plot,pst-node,pst-func}
\end{psinputs}
\usepackage{graphics,graphicx}


\tikzstyle{vertex}=[circle, draw, inner sep=0pt, minimum size=6pt]

\tikzset{->-/.style={decoration={
  markings,
  mark=at position .5 with {\arrow{>}}},postaction={decorate}}}


 \usepackage{marginnote}

\newcommand{\m}[1]{}


%
%

\usepackage[nothm]{thmbox}
\usepackage{amsthm}
\usepackage{thmtools}

\declaretheorem[parent=section,thmbox=M]{theorem}

\declaretheorem[numberlike=theorem,thmbox=M]{conjecture}

\newtheorem{claim}{Claim}[theorem]

\declaretheorem[numberlike=theorem]{lemma}
\declaretheorem[numberlike=theorem]{remark}
\declaretheorem[numberlike=theorem]{question}


%
%

\newenvironment{subproof}{\par\noindent {\it Proof}.\ }{\hfill$\blacklozenge$\par\vspace{11pt}}





\newcommand{\eN}{\mathbb{N}}

\newcommand{\sm}{\setminus}
\newcommand{\mc}{\mathcal}

\newcommand{\ra}{\rightarrow}
\newcommand{\Ra}{\Rightarrow}
\newcommand{\la}{\leftarrow}

\newcommand{\ora}[1]{\overrightarrow{#1}}

\DeclareMathOperator{\dic}{\ora \chi}

\newcommand{\F}{Forb_{ind}}

\newcommand{\overbar}[1]{\mkern 1.7mu\overline{\mkern-1.7mu#1\mkern-1.7mu}\mkern 1.7mu}

\tikzstyle{vertex}=[circle,draw, top color=gray!5, 
	    bottom color=gray!30, minimum size=12pt, scale=1, inner sep=0.5pt]
\tikzstyle{arc}=[->, > = latex',  thick]
\tikzstyle{edge}=[very thick, blue]

\newcommand{\omg}{oriented complete multipartite graph\xspace}

\newcommand{\omgs}{oriented complete multipartite graphs\xspace}

	{\noindent {}{#1}{}}{ This proves~(\arabic{claim}).\vspace{2ex}}

\title{Heroes in \omgs}

\author[1]{Pierre Aboulker}
\author[1,2]{Guillaume Aubian}
\author[2]{Pierre Charbit}

\affil[1]{DIENS, \'Ecole normale sup\'erieure, CNRS, PSL University, Paris, France.}
\affil[2]{Université de Paris, CNRS, IRIF, F-75006, Paris, France.}

\begin{document}

\maketitle

\begin{abstract}
The dichromatic number of a digraph is the minimum size of a partition of its vertices into acyclic induced subgraphs. Given a class of digraphs $\mathcal C$, a digraph $H$ is a hero in $\mc C$ if $H$-free digraphs of $\mathcal C$ have bounded dichromatic number. 
In a seminal paper, Berger at al. give a simple characterization of all heroes in tournaments. In this paper, we give a simple proof that heroes in quasi-transitive oriented graphs (that are digraphs with no induced directed path on three vertices) are the same as heroes in tournaments. We also prove that it is not the case in the class of oriented multipartite graphs, disproving a conjecture of Aboulker, Charbit and Naserasr, and give a characterisation of heroes in oriented complete multipartite graphs up to the status of a single tournament on $6$ vertices. 
\end{abstract}

\section{Introduction}
\subsection{Definitions and notations}
In this paper, we only consider \emph{directed graphs} (\emph{digraphs} in short) with no digons (a cycle on two vertices), loops nor multi-arcs. Let $G$ be a digraph. We denote by $V(G)$ its set of vertices and by $A(G)$ its set of arcs. 
For a vertex $x$ of $G$, we denote by $x^+$ (resp. $x^-$)  the set of its out-neighbours (resp. in-neighbours) and by $x^o$ the set of its non-neighbours with the convention that $x \notin x^o$). 
For a given set of vertices $X \subseteq V$, we denote by $G[X]$ the subgraph of $G$ induced by $X$.

Given two disjoint set of vertices $X, Y$ of a digraph $D$, we write $X \Rightarrow Y$ to say that for every $x \in X$ and for every $y \in Y$, $xy \in A(G)$, and we write $X \rightarrow Y$ to say that every arc with one end in $X$ and the other one in $Y$ is oriented from $X$ to $Y$ (but some vertices of $X$ might be non-adjacent to some vertices of $Y$). When $X=\{x\}$ we write $x \Rightarrow Y$ and $x  \rightarrow Y$. 

We also use the symbol $\Ra$ to denote a composition  operation on digraphs: for two digraphs $D_1$ and $D_2$, $D_1\Ra D_2$ is the digraph obtained from the disjoint union of $D_1$ and $D_2$ by adding all arcs from $V(D_1)$ to $V(D_2)$.

A  \emph{tournament} is an orientation of a complete graph. 
A {\em transitive tournament} is an acyclic tournament and we denote by $TT_n$ the unique acyclic tournament on $n$ vertices.  Given two tournaments $H_1$ and $H_2$, we denote by $\Delta(1,H_1,H_2)$ the tournament obtained from pairwise disjoint copies of $H_1$ and $H_2$ plus a vertex $x$, and all arcs from $x$ to the copy of $H_1$, all arcs from the copy of $H_1$ to the copy of $H_2$, and all arcs from the copy of $H_2$ to $x$.  When $\ell$ and $k$ are integers, we write $\Delta(1,k,H)$ for $\Delta(1, TT_k, H)$ and $\Delta(1,\ell,k)$ for $\Delta(1, TT_{\ell}, TT_k)$. The tournament $\Delta(1,1,1)$ is also denoted by $C_3$ and called a \emph{directed triangle}.

A \emph{$k$-dicolouring} of $G$ is a 
 partition of $V(G)$ into $k$ sets $V_{1}, \dots, V_{k}$ such that $G[V_{i}]$ is acyclic for $i = 1, \dots, k$. 
 The \emph{dichromatic number} of $G$, denoted by $\dic(G)$ and introduced by  Neuman-Lara~\cite{NL82}  is the minimum integer $k$ such that $G$ admits a $k$-dicolouring. 
We will sometimes extend $\dic$ to subsets of vertices, using $\dic(X)$ to mean $\dic(G[X])$ where $X \subseteq V$.

Given a set of digraphs $\mc H$, we say that a digraph $G$ is \emph{$\mc H$-free} if it contains no member of $\mc H$ as an induced subgraph. We denote by  $\F(\mc H)$ the class of $\mc H$-free digraphs. We write $\F(F_1, \dots, F_k)$ instead of $\F(\{F_1, \dots, F_k\})$ for simplicity. 
Given a class of digraphs $\mathcal C$, a digraph $H$ is a \emph{hero} in $\mathcal C$ if every $H$-free digraph in $\mathcal C$ has bounded dichromatic number. 

We denote by $\ora P_3$ the directed path on $3$ vertices.  
An \emph{\omg} is an orientation of a complete multipartite graph. Given two digraphs $G_1$ and $G_2$, $G_1 + G_2$ is the disjoint union of $G_1$ and $G_2$. We denote by $K_1$ the unique digraph on $1$ vertex. Observe that \omgs are precisely the digraphs in $\F(K_1 + TT_2)$.  

The main goal of this paper is to identify heroes in \omgs.  


\subsection{Context and results}

In a seminal paper, Berger et al.~\cite{hero} characterized heroes in tournaments:
\begin{theorem}[Berger et al.\ \cite{hero}]\label{thm:heroes}
A digraph $H$ is a hero in tournaments if and only if:
\begin{itemize}
\item $H=K_1$, or
\item $H=H_1 \Ra H_2$, where $H_1$ and $H_2$ are heroes in tournaments, or
\item $H=\Delta(1, k, H_1)$ or $H = \Delta(1, H_1, k)$, where $k\geq 1$ and $H_1$ is a hero in tournaments.
\end{itemize}
\end{theorem}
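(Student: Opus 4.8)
The statement is an equivalence, so I would fix the class $\mathcal{H}$ of digraphs generated by the three rules and prove that $H$ is a hero if and only if $H \in \mathcal{H}$, both implications by induction on $|V(H)|$. Two quick observations frame everything. A hero must be a tournament, since a non-tournament $H$ is never an induced subdigraph of any tournament, making every tournament $H$-free while tournaments have unbounded dichromatic number. And heroes are closed under induced subtournaments: if $H'$ is an induced subtournament of $H$, then every $H'$-free tournament is $H$-free, so a bound on the dichromatic number of the latter transfers to the former; contrapositively, any tournament containing a non-hero is itself a non-hero.

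For necessity (hero $\Rightarrow H \in \mathcal{H}$) I would split on strong connectivity. If $H$ is not strongly connected, its strong components are linearly ordered with complete domination between any two of them; taking $A$ to be the source component and $B$ the union of the others gives $H = A \Ra B$, and the monotonicity observation makes $A$ and $B$ heroes, so by induction they lie in $\mathcal{H}$ and hence $H \in \mathcal{H}$. If $H$ is strongly connected and not $K_1$, I must show it has the form $\Delta(1,k,H_1)$ or $\Delta(1,H_1,k)$. I would argue the contrapositive by localizing to obstructions: identify the minimal strongly connected tournaments not of this form, show each is a non-hero by an explicit construction of $H$-free tournaments of unbounded dichromatic number (iterated substitutions of directed triangles are the natural source, their dichromatic number growing without bound), and then check that every strongly connected tournament violating the $\Delta$ pattern contains such an obstruction as an induced subtournament.

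For sufficiency the base case $K_1$ is immediate, and two closure steps remain. For $H = H_1 \Ra H_2$ the key is a splitting lemma: for every $c$ there is a $d$ such that every tournament with dichromatic number exceeding $d$ contains disjoint sets $A, B$ with $A \Ra B$ and $\dic(A), \dic(B) > c$. Granting it, if $G$ is $(H_1 \Ra H_2)$-free with very large dichromatic number, I pick $c$ above the bounds guaranteed by $H_1$ and $H_2$ being heroes; then $A$ contains $H_1$, $B$ contains $H_2$, and $A \Ra B$ assembles them into a copy of $H_1 \Ra H_2$, a contradiction, so $\dic(G)$ is bounded. The splitting lemma itself I would prove by reducing to a strong component of large dichromatic number and then producing the complete-domination pair by a domination argument (median orders give convenient prefix/suffix domination to locate a clean cut while keeping both sides of large dichromatic number).

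The real obstacle is the third rule: showing $\Delta(1,k,H_1)$ (and symmetrically $\Delta(1,H_1,k)$) is a hero whenever $H_1$ is. The plan is a Ramsey-type bootstrap. Assuming $G$ is $\Delta(1,k,H_1)$-free but of very large dichromatic number, I would use the structural machinery to find a vertex $x$ together with a transitive set $T \subseteq x^+$ inducing $TT_k$ and a set $S \subseteq x^-$ of large dichromatic number with $T \Ra S$ and $S \Ra x$; since $H_1$ is a hero, such an $S$ contains $H_1$, and this produces the forbidden cyclic configuration $x \Ra TT_k \Ra H_1 \Ra x$. The decisive feature is that one side of the triangle need only be transitive: building a short dominating transitive chain $TT_k$ inside an out-neighbourhood while simultaneously closing the cycle back to $x$ is exactly what the quantitative domination lemmas control, whereas if neither side could be taken transitive (the shape $\Delta(1,H_1,H_2)$ with $H_1,H_2$ both non-transitive) the tournament is not a hero, matching the characterization. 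I expect this coordination --- forcing a dominant vertex, a transitive chain, and a large-dichromatic-number closing set at once without creating the triangle prematurely --- to be the technical heart of the argument and the place where the precise bounds and the transitivity restriction are genuinely used.
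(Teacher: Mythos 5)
The first thing to say is that the paper you were given does not prove this statement at all: Theorem~\ref{thm:heroes} is Berger et al.'s characterization, imported verbatim with a citation and used as a black box (the present paper only reproves fragments of it in passing, e.g.\ the closure under $H_1 \Ra H_2$ falls out of Theorem~\ref{thm:strongg} with $F = K_1$). So there is no in-paper proof to compare against, and I can only measure your outline against the argument in \cite{hero}. At the level of architecture you have reconstructed that argument correctly: heroes are tournaments, heroes are closed under subtournaments, necessity splits on strong connectivity with the source component giving $H = A \Ra B$, sufficiency needs a splitting lemma for $H_1 \Ra H_2$ and a domination/transitive-chain argument for $\Delta(1,k,H_1)$, and the asymmetry (one side transitive) is exactly where the characterization bites.

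There is, however, a concrete error in your necessity direction. The minimal strongly connected obstructions one must kill are $\Delta(TT_2,TT_2,TT_2)$ and $\Delta(1,C_3,C_3)$ (every strong tournament on at least two vertices is $\Delta(A,B,C)$ for nonempty $A,B,C$, and any such tournament violating the $\Delta(1,k,\cdot)$ pattern contains one of these two). You propose iterated substitutions of directed triangles as the source of $H$-free tournaments with unbounded dichromatic number, but already the second substitution power --- a directed triangle with each vertex blown up into a directed triangle --- contains $\Delta(C_3,C_3,C_3)$, hence contains both $\Delta(TT_2,TT_2,TT_2)$ and $\Delta(1,C_3,C_3)$ as subtournaments. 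So no iterated triangle substitution is $H$-free for either obstruction, and this family cannot witness that they are non-heroes. The constructions that actually work in \cite{hero} are bespoke and constitute a substantial portion of that paper; they are not something one can gesture at. Beyond this, the hardest positive step, that $\Delta(1,k,H_1)$ is a hero, is left as a plan (jewels, domination, a transitive chain in an out-neighbourhood closing back to $x$): you name the right ingredients, but nothing is executed, so as written the proposal is a mostly accurate table of contents with one wrong entry rather than a proof.
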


Observe that if a class of digraphs $\mathcal C$ contains all tournaments, then a hero in $\mc C$ must be a hero in tournaments. In \cite{ACN21}, it is conjectured that heroes in \omgs are the same as heroes in tournaments (actually a wider conjecture is proposed, see Section~\ref{sec:further_works}). We disprove this conjecture by showing  the following:  

\begin{theorem}\label{thm:ce} 
The digraphs $\Delta(1,2, C_3)$, $\Delta(1, C_3, 2)$, $\Delta(1,2, 3)$ and $\Delta(1,3, 2)$ are not heroes in \omgs.
\end{theorem}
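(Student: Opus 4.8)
Theorem~\ref{thm:heroes} classifies all four of these digraphs as heroes in tournaments, so, read literally against the class of tournaments, the statement would contradict that theorem; its content—and its role in disproving the conjecture of~\cite{ACN21}—is that none of the four is a hero in the class studied here, the \omgs. That is what I would prove. The method is the standard one: to show that a digraph $H$ is not a hero in a class $\mathcal C$ it suffices to produce members of $\mathcal C$ of unbounded dichromatic number that are $H$-free. So the task is to build, for each of the four digraphs, such a family of \omgs.

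I would first cut the four cases down to one. Arc-reversal $D\mapsto D^{T}$ sends an \omg to an \omg, preserves the dichromatic number (it preserves acyclicity, hence dicolourings), and sends an induced copy of $H$ to an induced copy of $H^{T}$; since $TT_k^{T}=TT_k$ and $C_3^{T}=C_3$, one has $\Delta(1,H_1,H_2)^{T}=\Delta(1,H_2^{T},H_1^{T})$, so $\Delta(1,2,3)^{T}=\Delta(1,3,2)$ and $\Delta(1,2,C_3)^{T}=\Delta(1,C_3,2)$. Hence $H$ is a hero in \omgs iff $H^{T}$ is, and it is enough to treat $\Delta(1,2,3)$ and $\Delta(1,2,C_3)$. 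Finally, as $TT_3$ and $C_3$ are the only tournaments on three vertices, an \omg avoids both of these at once iff it contains no configuration of a single type: six pairwise-adjacent vertices $x,a_1,a_2,b_1,b_2,b_3$ with $x\Rightarrow\{a_1,a_2\}$, an arc $a_1\to a_2$, $\{a_1,a_2\}\Rightarrow\{b_1,b_2,b_3\}$ and $\{b_1,b_2,b_3\}\Rightarrow x$ (the orientation inside $\{b_1,b_2,b_3\}$ being arbitrary). So it suffices to exhibit one family of \omgs of unbounded dichromatic number avoiding this configuration.

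For the construction, the constraint to keep in mind is that an \omg with $m$ parts has $\dic\le m$, since colouring each (independent) part with its own colour is a dicolouring; so an unbounded family must use unboundedly many parts, and the pattern cannot be killed simply by bounding the number of parts. The natural route is to begin from digraphs of large dichromatic number and introduce the non-adjacencies that tournaments lack—splitting vertices into independent sets and twisting the arcs between the resulting parts—so as to break every copy of the six-vertex configuration while keeping $\dic$ large. A recursive substitution $D_{k+1}=B[D_k]$ built from a fixed \omg gadget $B$ is the shape I would try, with $B$ chosen so that substitution forces $\dic(D_k)\to\infty$ while no apex-arc-triple pattern can close up across the copies.

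I expect the verification to be the main obstacle, since the two requirements pull against each other: one must simultaneously establish the lower bound $\dic(D_k)\to\infty$ and show that the six-vertex configuration never occurs. The second part is the delicate one. Note that $\Delta(1,TT_2,TT_2)$ is an induced subgraph of all four forbidden digraphs and differs from each by a single vertex, so if—as seems likely—it remains a hero in \omgs, then any family of unbounded dichromatic number must contain it, and the forbidden configuration cannot be excluded by any local rule. The exclusion must instead rest on a global feature of the construction, such as a pre-order on the parts inherited from the substitution hierarchy, which forbids the cyclic domination $x\Rightarrow\{a_1,a_2\}\Rightarrow\{b_1,b_2,b_3\}\Rightarrow x$ from ever closing up on six vertices lying in six distinct parts. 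Checking this for every such six-vertex set, at every level of the recursion, is where the real work lies.
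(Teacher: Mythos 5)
You read the statement correctly: taken literally it contradicts Theorem~\ref{thm:heroes} (all four digraphs \emph{are} heroes in tournaments), and the intended claim, which is what Section~\ref{sec:counter_exemple} actually proves, is that they are not heroes in \omgs. Your two reductions also match the paper exactly: arc reversal preserves \omgs and $\dic$ and swaps $\Delta(1,2,3)$ with $\Delta(1,3,2)$ and $\Delta(1,2,C_3)$ with $\Delta(1,C_3,2)$, so it suffices to treat $\Delta(1,2,3)$ and $\Delta(1,2,C_3)$; and avoiding both is equivalent to avoiding a single six-vertex pattern with arbitrary orientations inside the triple (this is Figure~\ref{fig:123}). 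But from that point on the proposal is not a proof. The whole content of the theorem is the construction of $\{\Delta(1,2,C_3),\Delta(1,2,3)\}$-free \omgs of unbounded dichromatic number, and you only gesture at ``a recursive substitution $D_{k+1}=B[D_k]$'' without specifying the gadget $B$, without a lower bound on $\dic(D_k)$, and without a pattern-freeness argument --- you yourself defer all three as ``where the real work lies.'' That is the gap: the missing part is not verification of a given construction, it is the construction itself.

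Moreover, the sketched route has concrete obstacles. Substituting non-complete \omgs into two vertices lying in a common part of $B$ destroys multipartiteness (non-adjacency stops being an equivalence relation), so $B$ must be a tournament; then choosing one vertex from each of six copies shows $B$ itself must avoid the six-vertex pattern, hence is $H$-free for a hero $H$ in tournaments, and you would still have to exclude copies of the pattern straddling two to five copies and, separately, prove that substitution actually forces $\dic(B[D_k])\to\infty$ --- none of this is automatic and none is addressed. The paper takes a completely different, explicit route: $D_s$ has vertex set the triples $i<j<k$ of $TT_s$ (the vertices of $L(L(TT_s))$), parts $V_j$ indexed by the middle coordinate, forward arcs the edges of $L(L(TT_s))$, and all other cross-part arcs backward. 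The lower bound $\dic(D_s)\geq\frac{1}{2}\log\log s$ comes from the Erd\H{o}s--Hajnal inequality $\chi(L(G))\geq\log(\chi(G))$ (Lemma~\ref{lem:line_graph}) together with the observation that in an acyclic induced subgraph the forward arcs contain no $\ora P_3$, hence split into two stable sets of $L(L(TT_s))$ (Lemma~\ref{lem:Ds_dic}). Pattern-freeness rests on a global invariant of exactly the kind you predicted but did not supply: every subtournament of $D_s$ meets each part at most once, so by Remark~\ref{rmk:Ds} its forward arcs form a feedback arc set that is a disjoint union of directed paths (Lemma~\ref{lem:feedback_edge_set}), whereas every feedback arc set of the six-vertex pattern must contain a vertex of in- or out-degree at least $2$ (Lemma~\ref{lem:123}). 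Your reductions and your intuition that only a global, order-based exclusion can work are sound, but without a concrete family and these two verifications the theorem remains unproved.
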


On the positive side, we prove that:
\begin{theorem}\label{thm:main}
A digraph $H$ is a hero in \omgs if:
\begin{itemize}
    \item $H= K_1$, 
    \item $H=H_1 \Ra H_2$, where $H_1$ and $H_2$ are heroes in \omgs, or
    \item $H=\Delta(1, 1, H_1)$ where $H_1$ is a hero in \omgs.
\end{itemize}
\end{theorem}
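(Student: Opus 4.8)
The plan is to induct on the number of vertices of $H$, following its recursive construction, and to prove the quantitative form of the statement: there is a constant $d(H)$ such that every $H$-free \omg $G$ satisfies $\dic(G) \le d(H)$. The base case $H = K_1$ is immediate, since a $K_1$-free digraph is empty and has $\dic = 0$. Each of the two building operations preserves tournament-ness, so every $H$ occurring here is a tournament, and throughout I write $d_1, d_2$ for the dichromatic bounds supplied by the induction hypothesis applied to $H_1, H_2$.

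For the case $H = H_1 \Ra H_2$, the basic observation is that in an $H$-free \omg $G$, for every induced copy $S$ of $H_1$ the common out-neighbourhood $N^+(S) = \bigcap_{s \in S} N^+(s)$ induces an $H_2$-free graph: otherwise $S$ together with an induced $H_2$ inside $N^+(S)$ would form an induced $H_1 \Ra H_2 = H$. Hence $\dic(N^+(S)) \le d_2$, and symmetrically $N^-(T)$ is $H_1$-free for every induced copy $T$ of $H_2$. The subtlety is that peeling off one such low-$\dic$ set and recursing only gives an additive (hence unbounded) estimate; instead I would reduce the bound to a \emph{domination-splitting} statement for \omgs: an \omg of sufficiently large dichromatic number contains disjoint $X, Y$ with $X \Ra Y$ and $\dic(X), \dic(Y) > \max(d_1, d_2)$. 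Feeding this into the $H$-free hypothesis produces a copy of $H_1$ inside $X$ completely dominating a copy of $H_2$ inside $Y$, i.e. an induced $H$, a contradiction; so $\dic(G)$ is bounded. This splitting statement, which must be established through the structure theory of \omgs, is the technical engine of this case.

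The case $H = \Delta(1,1,H_1)$ is the crux. I would first record the clean reformulation: writing $T_{ab} = N^+(b) \cap N^-(a)$ for the set of vertices $w$ completing an arc $a \to b$ into a directed triangle $a \to b \to w \to a$, an \omg $G$ is $\Delta(1,1,H_1)$-free if and only if $G[T_{ab}]$ is $H_1$-free — hence $\dic(T_{ab}) \le d_1$ — for \emph{every} arc $a \to b$. The task is thus to bound $\dic(G)$ from the hypothesis that every triangle-completion set has small dichromatic number. I would treat the base instance $H_1 = K_1$ first, where the condition is exactly $C_3$-freeness and one must prove that $C_3$-free \omgs have bounded dichromatic number; here the multipartite structure is used decisively, through the fact that the non-neighbours of any vertex form a single independent part, together with a levelling/ordering argument. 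The general case is then bootstrapped from this using the bound $d_1$ for $H_1$ and the decomposition $\dic(G) \le \dic(N^+(z)) + \dic(N^-(z)) + 1$ valid at any vertex $z$ of an \omg (the remaining non-neighbours of $z$, together with $z$, form an independent set absorbed into the additive constant).

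The main obstacle is precisely this last case. What makes it delicate, and genuinely different from the tournament setting, is that two of the three blobs of $\Delta(1,1,H_1)$ are single vertices: one cannot invoke a ``cyclic triple of large-$\dic$ parts'' engine, since by Theorem~\ref{thm:ce} the analogue $\Delta(1,2,C_3)$ already fails to be a hero in \omgs. The bound must therefore be extracted from the triangle-completion condition by an argument exploiting the independence of the parts, and I expect the heart of the proof to be an extremal/levelling argument showing that if $\dic(G)$ were large one could locate an arc $a \to b$ whose set $T_{ab}$ has dichromatic number exceeding $d_1$, forcing an induced $H_1$ inside it and hence an induced $\Delta(1,1,H_1)$, the desired contradiction.
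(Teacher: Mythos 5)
Your outline correctly isolates the two elementary observations that drive both cases (for $H=H_1\Ra H_2$, that the common out-neighbourhood of a copy of $H_1$ is $H_2$-free; for $H=\Delta(1,1,H_1)$, that $b^+\cap a^-$ is $H_1$-free for every arc $ab$), but in each case the entire technical content is delegated to an unproved ``engine,'' so the proposal has genuine gaps rather than a proof. For the case $H=H_1\Ra H_2$, your domination-splitting statement --- every \omg of sufficiently large dichromatic number contains disjoint $X\Ra Y$ with $\dic(X),\dic(Y)$ both large --- is not established, and it is not a lighter tool than the theorem: the only route to it that I can see goes through exhibiting unavoidable heroes of large dichromatic number in every \omg of large dichromatic number, which is essentially the theorem you are trying to prove (applied to $X$ and $Y$ being copies of iterated $\Delta(1,1,\cdot)$ heroes), so as written the argument is circular. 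The paper avoids any such splitting statement: it proves the stronger Theorem~\ref{thm:strongg} by a dichotomy on whether $G$ contains a \emph{$t$-cluster} (a bounded-size subgraph of chromatic number at least $t$), handling the cluster-free case by induction on $t$ via heavy/light arcs (Lemma~\ref{lemma:nocluster}) and the case with a $(3c+1)$-cluster by covering $V(G)$ with boundedly many $H_1$- or $H_2$-free sets indexed by copies of $H_1,H_2$ inside the cluster (Lemma~\ref{lemma:cluster}). None of that machinery appears in your sketch.

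The gap in the case $H=\Delta(1,1,H_1)$ is more serious. The proposed bootstrap via $\dic(G)\le\dic(z^+)+\dic(z^-)+1$ makes no progress: $z^+$ and $z^-$ are again $\Delta(1,1,H_1)$-free \omgs with no new structure, so the recursion does not terminate, and the single-vertex decomposition never brings the hypothesis ``$b^+\cap a^-$ is $H_1$-free for every arc $ab$'' into play. Your closing sentence (``I expect the heart of the proof to be an extremal/levelling argument \dots'') is a statement of what must be proved, not a proof of it. The paper's actual argument here is the jewel-chain construction: it first upgrades $H$ to $H\Ra H$ using the previous case, takes a maximum-length chain $(J_1,\dots,J_n)$ of disjoint copies of $H\Ra H$ with $J_i\Rightarrow J_{i+1}$ and $J_i\rightarrow J_j$, layers every remaining vertex by the first $J_j$ in which it has an out-neighbour, and then verifies the hypotheses of the layering Lemma~\ref{lem:backedge} (bounding each layer by splitting it into pieces controlled by $\Delta(1,1,H)$-freeness, the independence of parts, and the maximality of the chain). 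The key point your sketch misses is that one needs a long \emph{global} structure (the chain) against which to measure backward arcs; no purely local argument at a vertex or at a single arc suffices.
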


Observe that the second bullet of the theorem above implies that a digraph is a hero in \omgs if \textit{and only if } each of its strong connected components are. Indeed, the \textit{only if} part of the assertion holds because an induced subgraph of a hero in any class is a hero in this class.  

Since a hero in \omgs must be a hero in tournaments,  Theorem~\ref{thm:heroes}, Theorem~\ref{thm:ce} and Theorem~\ref{thm:main} imply that, to get a full characterization of heroes in \omgs, it suffices to decide whether $\Delta(1,2,2)$ is a hero in \omgs or not. If it is not, then heroes in \omgs are precisely the ones described in Theorem~\ref{thm:main}. If it is, then a digraph $H$ is a hero in \omgs if and only if:
\begin{itemize}
    \item $H= K_1$ or $H=\Delta(1,2,2)$, 
    \item $H=H_1 \Ra H_2$, where $H_1$ and $H_2$ are heroes in \omgs, or
    \item $H=\Delta(1, 1, H_1)$ where $H_1$ is a hero in \omgs.
\end{itemize}

\begin{question}
Is $\Delta(1,2,2)$ a hero in \omgs?
\end{question}

\begin{remark}
    Between the submission of this paper and its acceptation,  Bartosz Walczak proved that non-interlaced ordered graphs (see Section~\ref{sec:122} for the definition) have unbounded chromatic number, which, together with  Theorem \ref{thm:interlaced_hero}, implies that $\Delta(1,2,2)$ is not a hero. According to the discussion above, this result  settles the question of characterizing the heroes in oriented complete multipartite graphs. We believe in the proof but since it is not yet officially reviewed and published, we preferred to not yet claim the complete Theorem.
\end{remark}

A digraph $G$ is  \emph{quasi-transitive} if for every triple of vertices $x,y,z$, if $xy,yz \in A(G)$, then $xz \in A(G)$ or $zx \in A(G)$. Observe that the class of quasi-transitive digraphs is precisely $\F(\ora P_3)$. Our last result is:
\begin{theorem}\label{thm:quasi_main} 
Heroes in quasi-transitive digraphs are the same as heroes in tournaments. 
\end{theorem}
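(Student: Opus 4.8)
The plan is to prove the two inclusions separately. Since every tournament is quasi-transitive, the class of tournaments is contained in $\F(\ora{P_3})$, and the dichromatic number is monotone under taking induced subgraphs; hence any digraph that is a hero in quasi-transitive digraphs is automatically a hero in tournaments, which settles the easy direction. For the converse, first note that a hero $H$ in tournaments must itself be a tournament: if $H$ had two non-adjacent vertices then every tournament would be $H$-free, forcing all tournaments to have bounded dichromatic number, which is false. So I may assume $H$ is a tournament and fix $c$ such that every $H$-free tournament has dichromatic number at most $c$. The whole converse then reduces to the following structural identity, the \emph{key lemma}:
\[
\dic(D) \;=\; \max\{\,\dic(T) : T \text{ is an induced subgraph of } D \text{ that is a tournament}\,\}\quad\text{for every } D\in\F(\ora{P_3}).
\]
Granting this, the converse is immediate: if $D$ is an $H$-free quasi-transitive digraph, then every induced sub-tournament $T$ of $D$ is $H$-free, hence $\dic(T)\le c$, so $\dic(D)\le c$ by the key lemma, and $H$ is a hero in quasi-transitive digraphs.

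To prove the key lemma I will use two ingredients. The first is a substitution lemma for the dichromatic number: writing $B[D_1,\dots,D_m]$ for the digraph obtained by substituting $D_i$ into the $i$-th vertex of $B$, the value $\dic(B[D_1,\dots,D_m])$ depends only on $B$ and on the tuple $(\dic(D_1),\dots,\dic(D_m))$. Concretely it equals the least $r$ for which one can choose colour sets $C_1,\dots,C_m\subseteq\{1,\dots,r\}$ with $|C_i|\ge\dic(D_i)$ such that for each colour $\gamma$ the set $\{i:\gamma\in C_i\}$ induces an acyclic subdigraph of $B$. I would prove this by the usual two-sided cycle analysis: a monochromatic directed cycle in a candidate colouring either stays inside one block $D_i$, where it is killed by a proper dicolouring of $D_i$, or it visits at least two blocks, in which case the blocks it meets trace a closed walk, hence a directed cycle, in $B$ among the pieces carrying that colour. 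Thus feasibility of the colour sets is exactly what is needed, and conversely any dicolouring of $B[D_1,\dots,D_m]$ yields feasible colour sets by recording which colours occur in each block.

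The second ingredient is the decomposition of quasi-transitive digraphs due to Bang-Jensen and Huang: any $D\in\F(\ora{P_3})$ on at least two vertices can be written as $B[D_1,\dots,D_m]$ with $m\ge 2$ and each $D_i$ a strictly smaller quasi-transitive digraph, where $B$ is a transitive oriented graph (hence acyclic) when $D$ is not strong, and $B$ is a strong tournament when $D$ is strong. I then prove the key lemma by induction on $|V(D)|$. If $D$ is not strong, $B$ is acyclic, so the substitution lemma gives $\dic(D)=\max_i\dic(D_i)$, which by induction equals the maximum of $\dic(T)$ over induced sub-tournaments $T$ of the various $D_i$, and this is at most the corresponding maximum over $D$; the reverse inequality is trivial. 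If $D$ is strong, then $B=S$ is a tournament; by induction choose in each block $D_i$ an induced sub-tournament $T_i$ with $\dic(T_i)=\dic(D_i)$, and set $T=S[T_1,\dots,T_m]$. Because $S$ is a tournament and each $T_i$ is a tournament, $T$ is an induced sub-tournament of $D$; and since $\dic(T_i)=\dic(D_i)$ for all $i$, the substitution lemma gives $\dic(T)=\dic(D)$. Hence the maximum over induced sub-tournaments is at least $\dic(T)=\dic(D)$, which together with the trivial reverse inequality proves the lemma.

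The step I expect to be the real obstacle is exactly the strong case. The naive substitution bound $\dic(S[D_1,\dots,D_m])\le\dic(S)\cdot\max_i\dic(D_i)$ compounds multiplicatively through nested strong decompositions and yields no uniform bound: indeed $\dic(S[D_1,\dots,D_m])$ can strictly exceed $\max(\dic(S),\max_i\dic(D_i))$, for instance $\dic\big(C_3[\,C_3+K_1,\ C_3+K_1,\ C_3+K_1\,]\big)=3$ while every factor has dichromatic number $2$. The resolution is to stop bounding the product and instead observe that the exact value of $\dic(S[D_1,\dots,D_m])$ is already \emph{attained} by a single induced sub-tournament, obtained by replacing each block by an optimal sub-tournament of the same dichromatic number; this is precisely what the substitution lemma (value depending only on the $\dic(D_i)$) delivers, and it is the crux of the argument. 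A final point worth stressing is that quasi-transitivity is essential and enters only through the decomposition: the identity already fails for a long directed cycle, which is not $\ora{P_3}$-free.
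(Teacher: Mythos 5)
Your proof is correct. It rests on the same foundation as the paper's argument, namely the Bang-Jensen--Huang substitution decomposition of quasi-transitive digraphs into acyclic digraphs and tournaments followed by induction on the number of vertices, but the inductive engine is genuinely different. The paper's inductive statement is simply that $H$-free digraphs in $(\mc A\cup\mc T)^*$ are $c$-dicolourable, where $c$ bounds $H$-free tournaments: it peels off one acyclic substitution layer $X=X_1\cup\dots\cup X_t$, dicolours the digraph obtained by keeping only $X_1$, and recycles the palette of $X_1$ on every other $X_j$, the point being that an induced directed cycle meets the substituted set $X$ in at most one vertex. You instead prove an exact composition formula for the dichromatic number of a substitution (the value depends only on the base digraph and the tuple of dichromatic numbers of the blocks) and deduce the stronger identity $\dic(D)=\max\{\dic(T):T\text{ an induced sub-tournament of }D\}$ for every quasi-transitive $D$, after which the theorem is a one-line consequence. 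Your route buys a cleaner, exactly quantitative statement --- the extremal configuration is always witnessed by a tournament, and you correctly diagnose why the naive multiplicative substitution bound is the wrong thing to iterate through nested strong components --- at the cost of invoking the finer two-case form of the decomposition (acyclic quotient when $D$ is not strong, strong tournament quotient when it is) rather than mere membership in $(\mc A\cup\mc T)^*$. The paper's colour-reuse version is marginally more flexible in that it transfers verbatim to show that heroes in $(\mc A\cup\mc C)^*$ coincide with heroes in $\mc C$ for any substitution-closed class $\mc C$, whereas your key lemma is tailored to tournaments as the strong quotients. All the steps you sketch (both directions of the substitution formula, the fact that the decomposition has at least two strictly smaller blocks so the induction is well founded, and the reduction of the theorem to the key lemma) check out; the preliminary observation that a hero in tournaments must itself be a tournament is true but not actually needed.
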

\bigskip

\textbf{Organisation of the paper}:
We prove in Section~\ref{sec:counter_exemple} that $\Delta(1,2, C_3)$, $\Delta(1, C_3, 2)$, $\Delta(1,2, 3)$, $\Delta(1,3, 2)$ are not heroes in \omgs. We prove in Subsection~\ref{subsec:strong} that if $H_1$ and $H_2$ are heroes in \omgs, then so is $H_1 \Ra H_2$ and in subsection~\ref{subsec:growing_heroes} that if $H$ is a hero in \omgs, then so is $\Delta(1,1,H)$. We give some insight about whether $\Delta(1,2,2)$ should be a hero or not in \omgs in Section~\ref{sec:122} and finally, we prove Theorem~\ref{thm:quasi_main},  detail related results and propose some leads for further works in Section~\ref{sec:further_works}.


\section{Digraphs that are not heroes in \omgs}\label{sec:counter_exemple}

The goal of this section is to prove that $\Delta(1,2, C_3)$, $\Delta(1, C_3, 2)$, $\Delta(1,2, 3)$ and $\Delta(1,3, 2)$ are not heroes in \omgs. 
Since  reversing all arcs of a $\Delta(1,2, C_3)$-free \omg results in a $\Delta(1, C_3, 2)$-free \omg and does not change the dichromatic number, if $\Delta(1,2, C_3)$ is not a hero in \omgs then $\Delta(1,C_3, 2)$ is not either. 
Similarly, if $\Delta(1, 2, 3)$ is not a hero in \omgs then  $\Delta(1, 3, 2)$ is not either. 
Hence, it is enough to prove that  $\Delta(1,2, C_3)$ nor $\Delta(1,2, 3)$ are heroes in \omgs. This is implied by the existence of  $\{\Delta(1, 2, C_3), \Delta(1,2, 3)\}$-free \omgs with arbitrarily large dichromatic number. The rest of this section is dedicated to the description of such digraphs. \smallskip

A \emph{feedback arc set} of a given digraph $G$ is a set of arcs $F$ of $G$ such that their deletion from $G$ yields an acylic digraph. 
The idea of the construction comes from the fact that a feedback arc set of $\Delta(1,2, C_3)$ or of $\Delta(1,2,3)$ must induce a digraph with at least one vertex of in- or out-degree at least $2$. We then describe an \omg with large dichromatic number in which every subtournament has a feedback arc set inducing disjoint directed paths, implying that it does not contain $\Delta(1,2, C_3)$ nor $\Delta(1,2,3)$ by the fact above. 

\smallskip
Given an undirected graph $H$, a \emph{$k$-colouring} of $H$ is a partition of $V(G)$ into $k$ independent sets. The \emph{chromatic number} of $H$ is the minimum $k$ such that $H$ is $k$-colourable. 
Let $G$ be a digraph. We denote by $\chi(G)$ the chromatic number of the underlying graph of $G$. 
The (undirected) \emph{line graph} of $G$ is denoted by $L(G)$ and defined as follows: its vertex set is $A(G)$, and two of its vertices vertices $ab, cd \in A(G)$ are adjacent if and only if $b=c$. 

Be aware that the next lemma deals with chromatic number and not dichromatic number. We think it appears for the first time in~\cite{EH66}.

\begin{lemma}\cite{EH66}\label{lem:line_graph}
For every digraph $G$, we have $\chi(L(G)) \geq \log(\chi(G))$. 
\end{lemma}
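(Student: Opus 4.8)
The plan is to prove the equivalent inequality $\chi(G) \le 2^{\chi(L(G))}$, which yields the stated bound after taking $\log_2$ of both sides. The whole argument turns a proper colouring of $L(G)$ into a proper colouring of the underlying (undirected) graph of $G$, where the palette of the latter consists of \emph{subsets} of the palette of the former; this is exactly the mechanism that produces the exponential-to-logarithmic gap.

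Concretely, first I would fix an optimal proper colouring $\phi : A(G) \to \{1,\dots,c\}$ of $L(G)$, where $c = \chi(L(G))$. Unwinding the definition of $L(G)$, properness means precisely that any two \emph{consecutive} arcs --- an arc $uv$ and an arc $vw$ sharing the vertex $v$ as head and tail respectively --- receive distinct colours, i.e.\ $\phi(uv)\neq\phi(vw)$.

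The key step is to define, for every vertex $v$, the signature $\sigma(v) = \{\phi(vw) : vw \in A(G)\} \subseteq \{1,\dots,c\}$, the set of colours appearing on the out-arcs of $v$. I would then check that $\sigma$ is a proper colouring of the underlying graph of $G$. Indeed, consider an edge of the underlying graph, carried by an arc $uv$. On one hand $\phi(uv) \in \sigma(u)$, since $uv$ is an out-arc of $u$. On the other hand, every out-arc $vw$ of $v$ is consecutive with $uv$, so $\phi(vw)\neq\phi(uv)$; hence $\phi(uv)\notin\sigma(v)$. Therefore $\sigma(u)\neq\sigma(v)$, so adjacent vertices always receive different signatures.

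Finally, since each signature is a subset of a $c$-element set, $\sigma$ uses at most $2^c$ distinct colours, whence $\chi(G)\le 2^{c}=2^{\chi(L(G))}$ and the lemma follows. The argument is short, and I do not expect a serious obstacle: essentially everything rides on choosing the right signature. The only point demanding care is the verification that $\sigma$ is proper --- in particular, noticing that it suffices to record out-arc colours (recording both in- and out-arc colours would only give the weaker bound $4^c$), and that the orientation of the edge $uv$ may be taken arbitrarily since the roles of $u$ and $v$ are symmetric in the conclusion.
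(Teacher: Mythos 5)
Your proposal is correct and follows essentially the same argument as the paper: both convert a proper $c$-colouring of $L(G)$ into a proper colouring of the underlying graph of $G$ by assigning to each vertex the set of colours on its incident arcs on one side (the paper uses in-arcs where you use out-arcs, a purely symmetric choice), yielding $\chi(G)\le 2^{\chi(L(G))}$.
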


\begin{proof}
Let $G$ be a digraph and assume $L(G)$ admits a $k$-colouring.
Observe that a colouring of $L(G)$ is the same as a colouring of the arcs of $G$ in such a way that no $\ora P_3$ is monochromatic. 
Consider the following colouring of $G$: for each $v \in V(G)$, colour $v$ with the set of colours received be the arcs entering in $v$. This is a $2^k$-colouring of $G$ because the colouring of $A(G)$ does not have monochromatic $\ora P_3$. 
\end{proof}

Let $s \geq 3$ be an integer and let us describe the graph $L(L(TT_s))$. Assuming the vertices of $TT_s$ are numbered $v_1, \dots, v_s$ in the topological ordering (that is, for all $1 \le i<j \le s$, we have $v_iv_j \in A(T)$), for any $i<j<k$, $\{v_i,v_j,v_k\}$ induces a $\ora P_3$ in $TT_s$. This way, we get a natural name for the vertices of $L(L(TT_s))$, namely $V(L(L(TT_s))) =  \{(v_i,v_j,v_k) \mid \text{ for every } i <j <k\}$. Moreover, edges of $L(L(TT_s))$ are of the form $(v_i,v_j,v_k)(v_j,v_k,v_{\ell})$ for  every $i<j<k<\ell$. For $2 \leq j \leq s-1$, set $V_j=\{(v_i,v_j,v_k)\}: i<j<k\}$. So $V_j$'s partition the vertices of  $L(L(TT_s))$ into stable sets.

We now define the digraph $D_s$ from $L(L(TT_s))$ as follows. 
The vertices of $D_s$ are the same as the vertices of  $L(L(TT_s))$ and  
$D_s$ is an \omg  with parts $(V_2, V_3, \dots, V_{s-1})$ and we orient the arcs as follow: given $j<k$, the edges of $L(L(TT_s))$ are oriented from $V_j$ to $V_k$ and all the other arcs are oriented from $V_k$ to $V_j$. This complete the description of $D_s$. 

The arcs $v_iv_j$ such that $i<j$ are called the \emph{forward arcs} of $D_s$, and the other arcs the \emph{backward arcs} of $D_s$. Observe that the underlying graph  induced by the forward arcs of $D_s$ is $L(L(TT_s))$. 

The following remark is the crucial feature of $D_s$. 
\begin{remark}\label{rmk:Ds}
Given a vertex $(v_i,v_j,v_k)$ of $D_s$, the out-neighbours of $(v_i,v_j,v_k)$ are all in $V_k$ and the in-neighbours of $(v_i,v_j,v_k)$ are all in $V_i$. 
\end{remark}


 Observe that a digraph that does not contain $\ora P_3$ as a subgraph is bipartite: all its vertices have in-degree $0$ or out-degree $0$, and the set of vertices with in-degree $0$ (resp. with out-degree $0$) form a stable set. 

\begin{lemma}\label{lem:Ds_dic}
For every integer $s$, $\dic(D_s) \geq \frac{1}{2}\log(\log(s))$. 
\end{lemma}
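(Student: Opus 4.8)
The plan is to relate $\dic(D_s)$ to the chromatic number of $L(L(TT_s))$ and then apply Lemma~\ref{lem:line_graph} twice. Since the forward arcs of $D_s$ are exactly the edges of $L(L(TT_s))$ (oriented from the smaller-indexed part to the larger one), for any vertex set $C$ the induced subgraph $L(L(TT_s))[C]$ coincides with the underlying graph of the forward arcs inside $C$. The key combinatorial observation I would establish first is that every directed path on three vertices consisting of forward arcs closes into a directed triangle of $D_s$. Concretely, if $a\to b\to c$ are two consecutive forward arcs, then by Remark~\ref{rmk:Ds} one can write $a=(v_i,v_j,v_k)$, $b=(v_j,v_k,v_\ell)$ and $c=(v_k,v_\ell,v_m)$ with $i<j<k<\ell<m$; a direct check shows that $a$ and $c$ are \emph{not} adjacent in $L(L(TT_s))$ (their triples share only $v_k$, whereas adjacency requires sharing a consecutive pair). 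Hence the arc between $a$ and $c$ is a backward arc, and since $c$ lies in the higher-indexed part $V_\ell$ it is oriented $c\to a$, so $a\to b\to c\to a$ is a directed triangle.

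Now fix a $\dic(D_s)$-dicolouring with colour classes $C_1,\dots,C_k$. Since each $D_s[C_i]$ is acyclic it contains no directed triangle, so by the observation above no $C_i$ contains a monochromatic forward $\ora P_3$; equivalently, the digraph formed by the forward arcs inside $C_i$ contains no $\ora P_3$. In such a digraph every vertex is a source or a sink (a disjoint union of in- and out-stars, as observed before Lemma~\ref{lem:line_graph}); in particular its underlying graph is bipartite and hence $2$-colourable. Colouring each $L(L(TT_s))[C_i]$ with its own private pair of colours then yields a proper colouring of all of $L(L(TT_s))$: two adjacent vertices in the same class are separated by the bipartition, and two adjacent vertices in different classes use disjoint palettes. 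This gives $\chi(L(L(TT_s))) \le 2\,\dic(D_s)$.

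To finish, I would apply Lemma~\ref{lem:line_graph} twice. The underlying graph of $TT_s$ is $K_s$, so $\chi(TT_s)=s$, and therefore $\chi(L(L(TT_s))) \ge \log \chi(L(TT_s)) \ge \log\log \chi(TT_s) = \log\log s$. Combining this with the previous inequality gives $\dic(D_s) \ge \tfrac12 \chi(L(L(TT_s))) \ge \tfrac12 \log\log s$, as desired.

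I expect the main obstacle to be the first step, namely verifying that two consecutive forward arcs always close into a directed triangle. This is exactly where the specific definition of $D_s$ and Remark~\ref{rmk:Ds} are used, and it hinges on checking that the two endpoints of a forward $\ora P_3$ are non-adjacent in $L(L(TT_s))$, so that the connecting arc is backward and points in the right direction. Once this is in place, the reduction to $\chi(L(L(TT_s)))$ via the no-$\ora P_3$ structure and the double application of Lemma~\ref{lem:line_graph} are routine.
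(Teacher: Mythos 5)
Your proposal is correct and follows essentially the same route as the paper: observe that a forward $\ora P_3$ closes into a directed triangle (since its endpoints are non-adjacent in $L(L(TT_s))$ and the backward arc points the right way), deduce that each colour class of a dicolouring meets the forward digraph in a union of in- and out-stars, hence is $2$-colourable, giving $\chi(L(L(TT_s)))\le 2\dic(D_s)$, and finish with a double application of Lemma~\ref{lem:line_graph}. Your write-up just makes explicit the two applications of that lemma, which the paper compresses into one line.
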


\begin{proof}
Let $V_2, \dots, V_{s-1}$ be the partition of $D_s$ as in the definition. Recall that $V(D_s) = \{(v_i,v_j,v_k): 1\leq i<j<k\leq s\}$. 
Denote by $F_s$ the digraph induced by the forward arcs of $D_s$. So the underlying graph of $F_s$ is $L(L(TT_s))$ and by Lemma~\ref{lem:line_graph}, $\chi(F_s) \geq log(log(s))$. 

Let $R$ be an acyclic induced subgraph of $D_s$. 
Observe that a directed path on $3$ vertices in $D_s$ using only arcs in $F_s$ must be of the form $(v_{i_1},v_{i_2},v_{i_3}) \rightarrow (v_{i_2},v_{i_3},v_{i_4}) \rightarrow (v_{i_3},v_{i_4},v_{i_5})$  
where $1\leq i_1<i_2<i_3<i_4<i_5 \leq s$ and is thus contained in a directed triangle of $D_s$ (because $(v_{i_1},v_{i_2},v_{i_3})(v_{i_3},v_{i_4},v_{i_5})$ is not an edge of $L(L(TT_s))$, and thus is not an arc of $F_s$, and thus $(v_{i_3},v_{i_4},v_{i_5})(v_{i_1},v_{i_2},v_{i_3})$ is an arc of $D_s$).  
Hence, the digraph with arcs $A(R) \cap A(F_s)$ does not contain $\ora P_3$ as a subgraph and is thus bipartite.    
Hence, a $t$-dicolouring of $D_s$ implies a $2t$-(undirected) colouring of $F_s$. As we have that $\chi(F_s) \geq log(log(s))$, the result follows. 
\end{proof}

\begin{lemma}\label{lem:feedback_edge_set}
If $T$ is a tournament contained in $D_s$, then $T$ has a feedback arc set formed by disjoint union of directed paths. 
\end{lemma}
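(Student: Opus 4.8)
The plan is to exhibit an explicit feedback arc set, namely the set $F$ of forward arcs of $T$. First I would record the structure of a sub-tournament: since $D_s$ is an orientation of a complete multipartite graph whose parts are the stable sets $V_2, \dots, V_{s-1}$, any two vertices lying in the same part $V_j$ are non-adjacent. Hence a tournament $T$ contained in $D_s$ meets each part in at most one vertex; equivalently, the vertices of $T$ have pairwise distinct middle coordinates.

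Next I would use the orientation rule to establish that $F$ is indeed a feedback arc set. By construction every forward arc of $D_s$ has the form $(v_i,v_j,v_k)(v_j,v_k,v_\ell)$ with $j<k$, so along a forward arc the middle coordinate strictly increases, whereas along every backward arc it strictly decreases. Consequently $A(T)\setminus F$ consists only of backward arcs, along which the middle coordinate strictly decreases; such a digraph cannot contain a directed cycle, so it is acyclic. This shows that removing $F$ leaves an acyclic digraph, i.e.\ that $F$ is a feedback arc set of $T$.

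It then remains to check that $F$, viewed as a digraph on $V(T)$, is a disjoint union of directed paths. The key point is Remark~\ref{rmk:Ds}: the forward out-arcs of a vertex $(v_i,v_j,v_k)$ all go to $V_k$, and its forward in-arcs all originate in $V_i$. Since $T$ contains at most one vertex of each part, $(v_i,v_j,v_k)$ therefore has at most one forward out-neighbour and at most one forward in-neighbour inside $T$; that is, $F$ has maximum out-degree and in-degree at most $1$. A digraph with these degree bounds is a disjoint union of directed paths and directed cycles, and we already observed that $F$, having strictly increasing middle coordinate along each arc, is acyclic, which rules out cycles. Hence $F$ is a disjoint union of directed paths, as required.

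I do not expect a genuine obstacle here: the whole argument rests on the interaction between the transversal property of sub-tournaments and Remark~\ref{rmk:Ds}. The only point demanding care is the choice of which side of the cut to delete—the backward arcs need \emph{not} form disjoint paths (a single vertex may receive many of them), so it is essential that the feedback arc set be taken to be the forward arcs rather than the backward ones.
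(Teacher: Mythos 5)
Your proof is correct and follows essentially the same route as the paper: a subtournament meets each part $V_j$ at most once, so by Remark~\ref{rmk:Ds} the forward arcs within $T$ have maximum in- and out-degree $1$ and, being acyclic, form a disjoint union of directed paths that is clearly a feedback arc set. You merely spell out two points the paper leaves implicit (acyclicity via the increasing middle coordinate, which rules out directed cycles, and why the forward rather than backward arcs must be chosen), which is a sound elaboration rather than a different argument.
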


\begin{proof}
Let $T$ be a subgraph of $D_s$ inducing a tournament. Then each vertex of $T$ belongs to a distinct $V_i$ and thus, by Remark~\ref{rmk:Ds}, the forward arcs of $D_s$ that are in $T$ induce a disjoint union of directed paths and clearly form a feedback arc set of $T$. 
\end{proof}

\begin{lemma}\label{lem:123}
For every $s \geq 1$, $D_s$ does not contain  $\Delta(1, 2, C_3)$ nor $\Delta(1, 2, 3)$. 
\end{lemma}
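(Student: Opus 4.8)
The plan is to reduce the statement to a single structural fact shared by the two tournaments and then invoke Lemma~\ref{lem:feedback_edge_set}. Since $\Delta(1,2,C_3)$ and $\Delta(1,2,3)$ are themselves tournaments, and since any clique of the \omg $D_s$ must lie on a transversal of its parts, each of these tournaments is an induced subgraph of $D_s$ if and only if it is a subtournament of $D_s$. By Lemma~\ref{lem:feedback_edge_set}, every subtournament of $D_s$ has a feedback arc set that is a disjoint union of directed paths, hence a feedback arc set in which every vertex has in- and out-degree at most $1$. Thus it suffices to prove that neither $\Delta(1,2,C_3)$ nor $\Delta(1,2,3)$ admits such a feedback arc set; equivalently, as anticipated in the introduction, that \emph{every} feedback arc set of each of these two tournaments contains a vertex of in- or out-degree at least $2$.

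The second step is to observe that both tournaments contain the same skeleton: a vertex $x$, a pair $\{a_1,a_2\}$ and a triple $\{w_1,w_2,w_3\}$ of pairwise distinct vertices with $x \Ra \{a_1,a_2\}$, $\{a_1,a_2\} \Ra \{w_1,w_2,w_3\}$ and $\{w_1,w_2,w_3\} \Ra x$, where $\{w_1,w_2,w_3\}$ is the copy of $TT_3$ for $\Delta(1,2,3)$ and the copy of $C_3$ for $\Delta(1,2,C_3)$. The point is that the argument will use only these arcs and will be insensitive to how $w_1,w_2,w_3$ are oriented among themselves, so the two cases are treated at once.

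The core of the argument is then a short count at $x$. Suppose for contradiction that $F$ is a feedback arc set in which every vertex has in- and out-degree at most $1$. The only out-arcs of $x$ are $xa_1$ and $xa_2$, so at most one of them lies in $F$ and we may pick $i$ with $xa_i \notin F$; the only in-arcs of $x$ are $w_1x,w_2x,w_3x$, so at most one of them lies in $F$ and we may pick two indices $j\neq k$ with $w_jx \notin F$ and $w_kx \notin F$. Then $x \to a_i \to w_j \to x$ and $x \to a_i \to w_k \to x$ are two directed triangles, and $F$ must meet each of them; since none of $xa_i, w_jx, w_kx$ belongs to $F$, this forces $a_iw_j \in F$ and $a_iw_k \in F$, so $a_i$ has out-degree at least $2$ in $F$, a contradiction. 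This establishes the structural property for both tournaments and hence the lemma.

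I do not expect a genuine obstacle: the whole content is carried by the degree bound at $x$ forcing two out-arcs of a single $a_i$ into $F$. The only points deserving care are the reduction of induced-subgraph containment to subtournament containment (valid because the target is a complete tournament, so its vertices must sit in distinct parts of $D_s$), and the clean extraction of the two directed triangles from the degree bounds at $x$; notably, no case analysis on the orientation of $w_1,w_2,w_3$ is required, which is exactly what makes the proof uniform across $\Delta(1,2,C_3)$ and $\Delta(1,2,3)$.
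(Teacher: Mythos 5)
Your proof is correct and follows essentially the same route as the paper: both reduce, via Lemma~\ref{lem:feedback_edge_set}, to showing that every feedback arc set of either tournament must contain two arcs out of, or two arcs into, a single vertex, and both prove this by a degree count at the apex $x$ that is insensitive to the orientation inside the $K_3$. Your variant of the final count (excluding two of the three arcs $w_\ell x$ via the in-degree bound at $x$ and then forcing two out-arcs at $a_i$) is an equivalent, if anything slightly cleaner, version of the paper's pigeonhole on the three arc-disjoint paths from $y_1$ to $x$.
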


\begin{proof}
Observe that the two digraphs $\Delta(1, 2, C_3)$ and $\Delta(1, 2, 3)$ only differ on the orientation of one arc: reversing an arc of the copy of $C_3$ in $\Delta(1,2, C_3)$ leads to $\Delta(1,2, 3)$ and reversing an arc of the copy of $TT_3$ in $\Delta(1,2,3)$ leads to $\Delta(1,2, C_3)$. 
Our argument does not make any use of the orientations between the vertices inside this oriented $K_3$. Let $H$ be one of $\Delta(1, 2, C_3)$ or $\Delta(1, 2, 2)$, and let $x$ be the vertex in the copy of $K_1$, and  $y_1$ and $y_2$ the vertices in the copy of $TT_2$. See Figure~\ref{fig:123}.

Thanks to Lemma \ref{lem:feedback_edge_set}, it is enough to prove that in every feedback arc set of $H$, there exists a vertex with in- or out-degree at least $2$. Let $F$ be a feedback arc set of $H$ and assume for contradiction that it induces a disjoint union of directed paths. Then both $xy_1$ and $xy_2$ cannot belong to $F$. So we may assume without loss of generality that $xy_1 \notin F$. But then $F$ must intersect the three disjoint paths of length $2$ that go from $y_1$ to $x$, which necessarily implies that $F$ contains either two arcs coming out of $y_1$ or two arcs coming in $x$.
\end{proof}

\begin{figure}[ht]
\begin{center}
\begin{tikzpicture}[scale=1.5]

\node (x) at (2,2) [vertex] {$x$};
\node (y1) at (0.75,1) [vertex] {$y_1$};
\node (y2) at (1.5,0) [vertex] {$y_2$};
\node (z1) at (3,0) [vertex] {$z_1$};
\node (z2) at (4,0.25) [vertex] {$z_2$};
\node (z3) at (3.5,1) [vertex] {$z_3$};

\draw[arc] (x) to (y1);
\draw[arc] (x) to (y2);

\draw[arc] (y1) to (z1);
\draw[arc] (y1) to (z2);
\draw[arc] (y1) to (z3);
\draw[arc] (y2) to (z1);
\draw[arc, bend right] (y2) to (z2);
\draw[arc] (y2) to (z3);

\draw[arc] (z1) to (x);
\draw[arc, bend right=38] (z2) to (x);
\draw[arc] (z3) to (x);

\draw[edge] (y1) to (y2);

\draw[edge] (z1) to (z2);
\draw[edge] (z2) to (z3);
\draw[edge] (z3) to (z1);

\end{tikzpicture}
\end{center}
\caption{\label{fig:123} whatever the orientations of blue thick edges, $D_s$ does not contain this tournament and hence does not contain $\Delta(1,2,C_3)$ nor $\Delta(1,2,3)$.}

\end{figure}
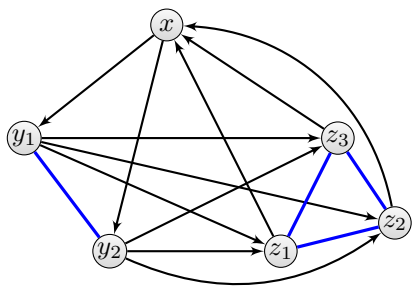

By Lemma~\ref{lem:Ds_dic} and Lemma~\ref{lem:123}, $\Delta(1,2,C_3)$ and $\Delta(1,2,3)$ are not heroes in \omgs.


\section{Heroes in \omgs}\label{sec:heroes}


\subsection{Strong components}\label{subsec:strong}

The goal of this subsection is to prove the following:

\begin{theorem}\label{thm:strong}
If $H_1$ and $H_2$ are heroes in $\F(K_1 + TT_2)$, then so is $H_1 \Ra H_2$. 
\end{theorem}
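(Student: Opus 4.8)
The plan is to deduce the theorem from a single structural fact about dichromatic number in the class $\F(K_1 + TT_2)$, which I will call the \emph{splitting lemma}:

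\begin{quote}
For every integer $c$ there is an integer $d=d(c)$ such that every \omg\ $G$ with $\dic(G)\ge d$ contains two disjoint vertex sets $A$ and $B$ with $A\Ra B$ and $\dic(A)\ge c$, $\dic(B)\ge c$.
\end{quote}

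Granting this, the theorem follows quickly. Since $H_1$ and $H_2$ are heroes, there are constants $c_1$ and $c_2$ bounding the dichromatic number of every $H_1$-free, respectively $H_2$-free, \omg; set $c=\max(c_1,c_2)+1$. Suppose $G$ is an \omg\ with $\dic(G)\ge d(c)$. The splitting lemma yields $A\Ra B$ with $\dic(A),\dic(B)\ge c$. As induced subgraphs of \omgs\ are again \omgs, and since $\dic(A)>c_1$ and $\dic(B)>c_2$, the subgraph $G[A]$ contains an induced copy of $H_1$ and $G[B]$ an induced copy of $H_2$. Because $A\Ra B$, every vertex of the $H_1$-copy dominates every vertex of the $H_2$-copy, and these are \emph{all} the arcs between the two copies; hence $G$ contains $H_1\Ra H_2$ as an induced subgraph. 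Thus every $(H_1\Ra H_2)$-free \omg\ has $\dic(G)<d(c)$, i.e. $H_1\Ra H_2$ is a hero.

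The work is therefore concentrated in the splitting lemma. The starting point is the inequality $\dic(G)\le \dic(v^+)+\dic(v^-)+1$, valid for every vertex $v$: one dicolours $v^+$ and $v^-$ separately and spends one extra colour on the remaining vertices, which is legitimate precisely because in an \omg\ the set $\{v\}\cup v^o$ is an independent set and hence acyclic. This is the only place where the complete-multipartite hypothesis is used, and it shows that large $\dic(G)$ forces, for every vertex, that at least one of $\dic(v^+),\dic(v^-)$ is large. I would then build the dominating pair by a recursion on $c$: passing to a vertex whose out- and in-neighbourhoods both carry large dichromatic number, one grows a set $A$ completely dominating a residual set of still-large dichromatic number, and symmetrically grows $B$ inside that residual set, with the parameter $c$ decreasing at each round so that a bounded number of rounds suffices.

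The main obstacle is exactly this last point. It is easy to grow a dominating set by repeatedly replacing the ground set by an out-neighbourhood, but the set so produced is transitive and has dichromatic number $1$, whereas I need $\dic(A)$ to be large as well; the same difficulty appears for $B$. Overcoming it means maintaining the full domination $A\Ra B$ while simultaneously \emph{increasing} the dichromatic number on both sides, which is what forces an induction on $c$ rather than a one-shot greedy extraction, and which is the genuinely delicate part of the argument. By contrast, the passage from tournaments to \omgs\ is comparatively painless: every non-adjacency lies inside a part, contributing only the harmless additive constant in the neighbourhood inequality above.
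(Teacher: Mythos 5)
Your reduction of the theorem to the ``splitting lemma'' is sound: if every \omg of dichromatic number at least $d(c)$ contains disjoint $A\Ra B$ with $\dic(A),\dic(B)\geq c$, then taking $c=\max(c_1,c_2)+1$ does force an induced copy of $H_1$ in $G[A]$ and of $H_2$ in $G[B]$, and $A\Ra B$ makes their union an induced $H_1\Ra H_2$. The problem is that the splitting lemma is never proved, and it carries the entire content of the theorem --- indeed it is formally at least as strong, since it yields the conclusion simultaneously for every pair of heroes. What you offer in its place is a one-paragraph sketch that you yourself flag as incomplete: you note that greedily shrinking to out-neighbourhoods produces sets of dichromatic number $1$, that one must instead ``maintain the full domination $A\Ra B$ while simultaneously increasing the dichromatic number on both sides,'' and that this is ``the genuinely delicate part of the argument'' --- but that delicate part is exactly what is missing. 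There is also a concrete flaw in the sketch as written: the inequality $\dic(G)\leq \dic(v^+)+\dic(v^-)+1$ only guarantees that for each vertex \emph{at least one} of $\dic(v^+),\dic(v^-)$ is large, so the step ``passing to a vertex whose out- and in-neighbourhoods \emph{both} carry large dichromatic number'' has no justification; a digraph of large dichromatic number may have no such vertex, and handling the two one-sided populations separately is itself a nontrivial part of any argument of this type.

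For comparison, the paper does not attempt a splitting lemma. It proves a stronger statement (Theorem~\ref{thm:strongg}) by introducing $t$-clusters (bounded-size vertex sets whose underlying graph has chromatic number at least $t$) and splitting into two cases: if $G$ contains no $t$-cluster for $t=3c+1$, an induction on $t$ using heavy/light arcs and the sets $C_{uv}=v^+\cap u^-$ bounds $\dic(G)$ (Lemma~\ref{lemma:nocluster}); if $G$ does contain such a cluster $K$, every vertex of $G$ either has a copy of $H_1$ in its in-neighbourhood inside $K$ or a copy of $H_2$ in its out-neighbourhood inside $K$, and the boundedly many such copies cover $V(G)$ by $H_2$-free and $H_1$-free pieces (Lemma~\ref{lemma:cluster}). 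Nowhere does the paper need to produce two dominating sets that \emph{both} have large dichromatic number, which is precisely the obstacle your sketch runs into. As it stands your proposal is a correct reduction to an unproved statement, not a proof.
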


We actually prove the following stronger result:

\begin{theorem}\label{thm:strongg}
Let $H_1$, $H_2$ and $F$ be digraphs such that $H_1 \Ra H_2$ is a hero in $\F(F)$ and $H_1$ and $H_2$ are heroes in $\F(K_1 + F)$. Then $H_1 \Ra H_2$ is a hero in $\F(K_1 + F)$. 
\end{theorem}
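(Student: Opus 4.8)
The plan is to fix, once and for all, the three constants guaranteed by the hypotheses: let $c$ bound the dichromatic number of every $(H_1 \Ra H_2)$-free digraph in $\F(F)$, and let $c_1, c_2$ bound the dichromatic number of every $H_1$-free, respectively $H_2$-free, digraph in $\F(K_1 + F)$. Given an arbitrary $(H_1 \Ra H_2)$-free digraph $G \in \F(K_1 + F)$, I want to bound $\dic(G)$ by a function of $c, c_1, c_2$ and $|V(F)|$ only. Two base cases are immediate: if $G$ is $F$-free then $G \in \F(F)$, so the hypothesis on $c$ gives $\dic(G) \le c$; and if $G$ is $H_1$-free or $H_2$-free then $\dic(G) \le c_1$ or $\dic(G) \le c_2$. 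So I may assume $G$ contains induced copies of each of $F$, $H_1$ and $H_2$.

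The first genuine step exploits the gap between $\F(F)$ and $\F(K_1+F)$. Since $G$ contains an induced copy $\tilde F$ of $F$ and $G$ is $(K_1+F)$-free, no vertex can be non-adjacent to all of $\tilde F$; hence $S := V(\tilde F)$ is a dominating set of $G$ of size $|V(F)|$, and every vertex outside $S$ lies in $u^+$ or $u^-$ for some $u \in S$. Using that the dichromatic number is subadditive over unions, this yields
\[
\dic(G) \;\le\; |V(F)| + \sum_{u \in S}\bigl(\dic(G[u^+]) + \dic(G[u^-])\bigr).
\]
Thus the whole statement reduces to a \emph{neighbourhood lemma}: there is a bound, independent of $G$, on $\dic(G[v^+])$ and $\dic(G[v^-])$ for every vertex $v$ of such a $G$; the two cases are handled by dual arguments and I describe the out-neighbourhood one.

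To attack this I would use the two domination facts that drive every "$H_1 \Ra H_2$" argument. If $B$ induces $H_2$, then the set $D(B) := \{w : w \Ra B\}$ of its complete dominators must be $H_1$-free — an induced $H_1$ inside $D(B)$ would, together with $B$, form an induced $H_1 \Ra H_2$ — so $\dic(D(B)) \le c_1$; dually, if $A$ induces $H_1$ then the completely dominated set $R(A) := \{w : A \Ra w\}$ is $H_2$-free and $\dic(R(A)) \le c_2$. Now fix $v$ and set $A_0 = v^+$, noting $v \Ra A_0$. The vertices of $A_0$ lying in no induced copy of $H_2$ contained in $A_0$ form an $H_2$-free set of dichromatic number at most $c_2$, so the task is to bound $\dic$ of the union of all copies of $H_2$ inside $A_0$. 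Here I would iterate the domination structure: following a chain $v = x_0, x_1, x_2, \dots$ with each $x_{i+1}$ in the common out-neighbourhood $A_i := x_0^+ \cap x_1^+ \cap \dots \cap x_i^+$, the set $\{x_0,\dots,x_i\}$ induces a transitive tournament that completely dominates $A_i$; as soon as such a dominating set contains, or can be enriched into, an induced $H_1$, the region it dominates is $H_2$-free and contributes at most $c_2$, while the regions that become $F$-free are controlled by $c$ through the $\F(F)$-hypothesis.

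The main obstacle is exactly this last step: turning the abundance of copies of $H_1$ and $H_2$ — which heroness forces to appear once the dichromatic number is large — into one genuine occurrence of the forbidden configuration $H_1 \Ra H_2$, that is, a single induced $H_1$ completely dominating a single induced $H_2$. The common-out-neighbourhood chains only build \emph{transitive} dominating tournaments, so for a non-transitive $H_1$ they do not suffice, and one needs a different selection mechanism (a Ramsey-type choice among the many dominated copies of $H_2$, or an induction following the recursive hero-structure of $H_1$), combined with a careful reinjection of the hypothesis that $H_1 \Ra H_2$ is a hero in $\F(F)$: the deeply dominated regions $A_i$ are $(H_1 \Ra H_2)$-free, hence controlled by $c$ once they are $F$-free and by the $H_1$-/$H_2$-free domination sets otherwise. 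Organising these pieces so that $V(G)$ is covered by a number of colour classes bounded independently of $G$ is the technical heart of the argument.
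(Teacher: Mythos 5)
Your proposal assembles several of the right local observations---that non-neighbourhoods in a $(K_1+F)$-free digraph induce $F$-free (hence, here, $c$-dicolourable) sets, that the common dominators of an induced copy of $H_2$ form an $H_1$-free set, and dually---and these do appear in the paper's argument. But the proof is not complete, and you say so yourself: the step you call ``the technical heart'', namely covering $V(G)$ by boundedly many such pieces, is exactly what is missing. Two concrete problems. First, your reduction to the ``neighbourhood lemma'' is not a genuine reduction: a uniform bound on $\dic(G[v^+])$ and $\dic(G[v^-])$ for \emph{every} vertex of \emph{every} $G$ in the class is essentially equivalent to the theorem itself (it follows trivially from the conclusion, and nothing in the structure of $G[v^+]$ makes it easier to handle than $G$); indeed the paper only ever establishes a bound on $\min(\dic(v^-),\dic(v^+))$, and even that requires a further partition argument to exploit. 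Second, the mechanism you propose for producing the forbidden configuration---chains $x_0,x_1,\dots$ with $x_{i+1}$ in the common out-neighbourhood of its predecessors---only yields \emph{transitive} dominating tournaments, which, as you note, cannot produce an induced copy of a non-transitive $H_1$; you offer no replacement.

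The paper's way around this obstacle is a device absent from your proposal: the \emph{$t$-cluster}, a subdigraph of dichromatic number at least $t$ on at most $f(t)$ vertices. If $G$ contains a $(3c+1)$-cluster $K$, then for every vertex $u$ the partition of $V(K)$ into $u^+\cap V(K)$, $u^-\cap V(K)$ and $u^o\cap V(K)$ (the last being $(F,H_1\Ra H_2)$-free, hence $c$-dicolourable) forces $u^-\cap V(K)$ to contain an induced $H_1$ or $u^+\cap V(K)$ to contain an induced $H_2$; since $|V(K)|\leq f(3c+1)$ there are only boundedly many such copies inside $K$, and the corresponding dominator/dominated sets (each $H_2$-free or $H_1$-free) cover $V(G)$ with at most $2^{f(3c+1)+1}$ sets of dichromatic number at most $c$. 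This is what converts ``many copies of $H_1$ and $H_2$'' into a bounded cover without ever constructing a single $H_1\Ra H_2$. The complementary case, where $G$ has no $t$-cluster, is handled by an induction on $t$ using a notion of heavy and light arcs (an arc $uv$ is heavy if the set of vertices forming a directed triangle with it contains a $(t-1)$-cluster), which is the part that bounds $\min(\dic(u^-),\dic(u^+))$ and then finishes via a two-part split of $V(G)$. Without the cluster dichotomy, or some substitute for it, your outline does not close.
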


To see that Theorem~\ref{thm:strongg} implies Theorem~\ref{thm:strong}, take $F=TT_2$ and observe that $\F(TT_2)$ is the class of digraphs with no arc and thus every digraph is a hero in $\F(TT_2)$.  We explain why such a stronger version can be of interest for future works in section~\ref{sec:further_works}

Note also that by taking $F=K_1$, we have that $\F(F)$ is empty and that $\F(K_1+F)$ is the class of tournaments, so Theorem~\ref{thm:strongg} yields the result of \cite{hero} (see \textbf{(3.1)}) stating that $H$ is a hero in tournaments if and only if all of its strong components are. Then, by induction, we get the same result for the class of digraphs with bounded independence number, reproving Theorem 1.4 of~\cite{HLNT19}. 
\medskip

The rest of this subsection is devoted to the proof of Theorem \ref{thm:strongg}, which is inspired but simpler (we got rid of the intricate notion of \emph{$r$-mountain}) than the analogous result for tournaments in \cite{hero}, even though our result is more general.

We start with a few definitions and notations. First, in order to simplify statements of the lemmas, we assume $H_1$, $H_2$ and $F$ are fixed all along the subsection and are as in the statement of Theorem~\ref{thm:strongg}. So there exists constants $c$ and $h$ such that:
\begin{itemize}
\item $H_1$ and $H_2$ have at most $h$ vertices,
\item digraphs in $\F(F, H_{1} \Rightarrow H_{2})$ have dichromatic number at most $c$, 
\item for $i=1,2$, digraphs in $\F(K_1+F, H_i)$ have dichromatic number at most $c$. 
\end{itemize}

If $G$ is a digraph and $uv \in A(G)$, we set $C_{uv} =v^+ \cap u^-$, that is the of vertices that form a directed triangle with $u$ and $v$. Finally, for $t\in \eN$, we say that a digraph $K$ is a \emph{$t$-cluster} if $\dic(K)\geq t$ and $|V(K)|\leq f(t)$, where $f(t)$ is the function defined recursively by $f(1)=1$ and $f(t)=1+f(t-1)(1+f(t-1))$.

The structure of the proof is very simple, we prove that digraphs in $\F(K_1+F, H_{1} \Rightarrow H_{2})$ that do not contain a $t$-cluster  have  dichromatic number bounded by a function of $t$ (Lemma~\ref{lemma:nocluster}), and then that the ones that contain a $t$-cluster  also have  dichromatic number bounded by a function of $t$ if $t$ is large enough (Lemma~\ref{lemma:cluster}). 

\begin{lemma}\label{lemma:nocluster}
There exists a function $\phi$ such that if $t$ is an integer and $G$ is a digraph in $\F(K_1+F, H_{1} \Rightarrow H_{2})$ which contains no $t$-cluster as a subgraph, then $\dic(G)\leq \phi(c,h,t)$
\end{lemma}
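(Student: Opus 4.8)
The plan is to prove the lemma by induction on $t$, peeling $G$ into pieces whose dichromatic number is already under control, and—crucially—decreasing the ``cluster level'' from $t$ to $t-1$ each time we pass to a neighbourhood. Throughout, write $m=|V(F)|$, a fixed constant. For the base case $t=1$: a $1$-cluster is a single vertex, so having no $1$-cluster forces $G$ to be empty, and $\dic(G)=0$. For the inductive step I would first split on whether $G$ contains $F$. If $G$ is $F$-free then, being also $(H_1\Ra H_2)$-free, it lies in $\F(F,H_1\Ra H_2)$, so the hypothesis that $H_1\Ra H_2$ is a hero in $\F(F)$ gives $\dic(G)\le c$ and we are done. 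So the interesting case is when $G$ contains an induced copy $F_0$ of $F$.

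In that case I would exploit the $(K_1+F)$-freeness as a domination statement: no vertex outside $F_0$ can be non-adjacent to all of $F_0$, since that would produce an induced $K_1+F$. Hence every vertex of $V(G)\setminus F_0$ lies in $u^+\cup u^-$ for some $u\in F_0$. Assigning each such vertex to one neighbourhood it belongs to partitions $V(G)\setminus F_0$ into at most $2m$ sets, each contained in some $u^+$ or some $u^-$ with $u\in F_0$.

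The key observation is that for any vertex $u$, the induced subgraph $G[u^+]$ (and likewise $G[u^-]$) contains no $(t-1)$-cluster. Indeed, if $K\subseteq u^+$ satisfied $\chi(K)\ge t-1$ and $|V(K)|\le f(t-1)$, then, since $u$ is complete to $K$, the set $K\cup\{u\}$ would have $\chi(K\cup\{u\})=\chi(K)+1\ge t$ and $|V(K\cup\{u\})|\le f(t-1)+1\le f(t)$, i.e.\ it would be a $t$-cluster of $G$, a contradiction (here I only need $f(t)\ge f(t-1)+1$, immediate from the recursion). Each $G[u^+]$ and $G[u^-]$ is an induced subgraph of $G$, hence still in $\F(K_1+F,H_1\Ra H_2)$, and by the above has no $(t-1)$-cluster; so the induction hypothesis gives $\dic(G[u^+]),\dic(G[u^-])\le \phi(c,h,t-1)$, and by monotonicity of $\dic$ under induced subgraphs the same bound holds on each part of the partition. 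Using subadditivity of $\dic$ over a vertex partition together with $\dic(G[F_0])\le m$,
\[
\dic(G)\ \le\ m+\sum_{u\in F_0}\bigl(\dic(G[u^+])+\dic(G[u^-])\bigr)\ \le\ m+2m\,\phi(c,h,t-1),
\]
so setting $\phi(c,h,t)=\max\{c,\ m+2m\,\phi(c,h,t-1)\}$ with $\phi(c,h,1)=0$ closes the induction.

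The hard part is really just the key observation that in- and out-neighbourhoods drop the cluster level from $t$ to $t-1$; this is what makes the induction terminate, and once domination by a copy of $F$ is in hand the rest is bookkeeping. Two points warrant care. First, the constant $m=|V(F)|$ is a fixed quantity that must be tracked (it can be absorbed into the ambient data, since $F$ is fixed). Second, I would double-check that the inner $F$-free base case invokes the hero property of $H_1\Ra H_2$ in $\F(F)$, \emph{not} that of $H_1$ and $H_2$; the latter, together with the stronger quadratic growth of $f$, is presumably what the companion cluster case (Lemma~\ref{lemma:cluster}) will need, which is why $f$ is defined with the full recursion $f(t)=1+f(t-1)(1+f(t-1))$ even though the present argument uses only $f(t)\ge f(t-1)+1$.
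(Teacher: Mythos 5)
There is a genuine gap, and it sits exactly where you put the weight: the ``key observation'' that $G[u^+]$ and $G[u^-]$ contain no $(t-1)$-cluster. Your argument is that attaching $u$ to a $(t-1)$-cluster $K\subseteq u^+$ raises the chromatic number by one because $u$ is complete to $K$. That is true for the chromatic number of the \emph{underlying} graph, but the cluster notion this lemma actually needs is the dichromatic one, i.e.\ a $t$-cluster must satisfy $\dic(K)\geq t$: the paper's own proof of this lemma derives its contradiction from a \emph{monochromatic directed triangle} in a $(t-1)$-colouring (if that colouring were a proper colouring of the underlying graph, $u$ and its neighbour $v$ sharing a colour would already be absurd and no triangle would be needed), and the companion Lemma~\ref{lemma:cluster} concludes by partitioning a $(3c+1)$-cluster into three sets of dichromatic number at most $c$, which is only a contradiction if the cluster condition reads $\dic(K)\geq 3c+1$. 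Under the dichromatic reading your step fails: if $u\Rightarrow K$ then $u$ is a source in $K\cup\{u\}$, so it can be added to any acyclic colour class and $\dic(K\cup\{u\})=\dic(K)$; a dominating or dominated vertex buys nothing. Hence $u^+$ may perfectly well contain a $(t-1)$-cluster while $G$ contains no $t$-cluster, and the induction never gets off the ground.

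This is precisely why the paper's proof is so much heavier. To boost a $(t-1)$-cluster to a $t$-cluster one needs directed cycles through the new vertex, so the paper works with $C_{uv}=v^+\cap u^-$ (the vertices forming a directed triangle with the arc $uv$), calls $uv$ heavy when $C_{uv}$ contains a $(t-1)$-cluster, and assembles the two-level object $\{u\}\cup V(K)\cup\bigcup_{v}V(K_v)$ --- which is also why $f$ needs the full quadratic recursion already in this lemma, not only in Lemma~\ref{lemma:cluster}. Light arcs then only yield a bound on $\min(\dic(u^-),\dic(u^+))$ rather than on both neighbourhoods, which forces the subsequent $V^-/V^+$ split and the case analysis around induced copies of $H_1$ and $H_2$. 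A telling symptom in your write-up is that you never invoke the hypothesis that $H_1$ and $H_2$ are heroes in $\F(K_1+F)$, whereas the paper's proof uses it twice (for the sets $C$ that end up complete to a copy of $H_2$, resp.\ dominated by a copy of $H_1$). Your outer skeleton (induction on $t$, domination by a copy of $F$, subadditivity over the pieces) is sound, but without a correct replacement for the key observation it does not prove the lemma.
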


\begin{proof}
We prove this by induction on $t$. For $t=1$ the result is trivial as a $1$-cluster is simply a vertex. Assume the existence of $\phi(c,h,t-1)$, and assume $G$ is a digraph in $\F(K_1+F, H_{1} \Rightarrow H_{2})$ which contains no $t$-cluster. Say an arc $uv$ is \emph{heavy} if $C_{uv}$ contains a $(t-1)$-cluster, and \emph{light} otherwise. For a vertex $u$ we define $g(u)=\{v\in V(G) \mid  uv \text{ or } vu \text{ is a heavy arc}\}$. 

\begin{claim}
For any vertex $u$, $g(u)$ contains no $(t-1)$-cluster. 
\end{claim}

\begin{subproof}
Assume by contradiction that $K$ is a $(t-1)$-cluster in $g(u)$. By definition of $g(u)$, for every $v \in V(K)$, there exists a $(t-1)$-cluster $K_v$ in $C_{uv}$ or $C_{vu}$ (depending on which of $uv$ or $vu$ is an arc). Let $K'=\{u\}\cup V(K) \cup (\cup_{v\in K} V(K_v))$. We claim that $K'$ is a $t$-cluster. First note that the number of vertices of $K'$ is at most $1+f(t-1) +f(t-1) \cdot f(t-1)=f(t)$. We need to prove that $K'$ is not $(t-1)$-colourable, so let us consider for contradiction a $(t-1)$-colouring of its vertices, and without loss of generality assume $u$ gets colour $1$. Because $K$ is a $(t-1)$-cluster, some vertex $v$ in $K$ must also receive colour $1$, and since $K_v$ is also a $(t-1)$-cluster, some vertex $w$ in $K_v$ must also receive colour $1$, which produces a monochromatic directed triangle. So $K'$ is indeed a $t$-cluster, a contradiction. 
\end{subproof}

\begin{claim}
For any vertex $u$, $\min(\dic(u^{-}),\dic(u^{+})) \leq (h+1) \cdot (\phi(c,h,t-1)+c)$.
\end{claim}

\begin{subproof}
    Let $u \in V(G)$. By the previous claim and the induction hypothesis, $g(u)$ induces a digraph of dichromatic number at most $\phi(c,h,t-1)$, so it is enough to prove that one of the sets $u^-_{\ell}:=(u^{-}\setminus g(u))$ or $u^+_{\ell}:=(u^{+} \setminus g(u))$ induces a digraph with dichromatic number at most $h \cdot \phi(c,h,t-1)+c\cdot (h+1)$. 
    
    If $u^+_{\ell}$ induces an $H_2$-free digraph, then it has  dichromatic number at most $c< h\cdot\phi(c,h,t-1)+c\cdot (h+1)$, so we can assume that there exists $V_2 \subseteq u^{+}_{\ell}$ such that $G[V_2] = H_{2}$. We now cover $u^-_{\ell}$ with three sets $A, B, C$, each of which will have bounded dichromatic number.
    
    Let $A = u^{-}_{\ell} \cap (\cup_{v \in V_2} v^+) =u^{-}_{\ell} \cap (\cup_{v \in V_2} C_{uv})$.
    For every $v \in V_2$, $uv \in A(G)$ is light (because $V_2 \subseteq u_{\ell}^+$), so $G[C_{uv} \cap A]$ does not contain a $(t-1)$-cluster and is thus $\phi(c,h,t-1)$-colourable by induction. Now, since $H_2$ contains at most $h$ vertices, we get $\dic(A) \leq h \cdot  \phi(c,h,t-1)$.
        
    Let $B=u^{-}_{\ell} \cap (\cup_{v \in V_2} v^o)$. Since $G$ is $(K_1+F, H_1 \Ra H_2)$-free, for every $v \in V_2$, $v^o$ is $(F, H_1 \Ra H_2)$-free and thus $\dic(G[v^o]) \leq c$. Hence, $\dic(B) \leq c\cdot h$. 
    
    Finally, consider $C=u^{-}_{\ell}\setminus (A\cup B)$. By definition of $A$ and $B$, we get $C \Rightarrow V_2$. Since $G$ is $H_1 \Ra H_2$-free, $G[C]$ is $H_1$-free, and therefore $\dic(C) \leq c$.
    
    All together, we get $\dic(x^{-}_{\ell}) \leq h \cdot \phi(c,h,t-1)+c \cdot (h + 1)$ as desired.
\end{subproof}

By the previous claim, we can partition the set of vertices into the two sets $V^-$ and $V^+$ defined by: 
\begin{eqnarray*}
V^- = \{ u \in V \mid \dic(u^{-}) \leq (h+1) \cdot (c+\phi(c,h,t-1))\}\\
V^+ = \{ u \in V \mid \dic(u^{+}) \leq (h+1) \cdot (c+\phi(c,h,t-1))\}
\end{eqnarray*}

    If $G[V^-]$ is $H_1$-free and $G[V^+]$ is $H_2$-free, then $\dic(G) \leq 2c<\phi(c,h,t) $ and we are done. Assume  that there exists $V_1 \subseteq V^-$ such that $G[V_1] = H_{1}$ (the case where $V^+$ contains an induced copy of $H_2$ is symmetrical). 

    We now cover $V(G) \setminus V_1$ with three sets of vertices depending on their relation with $V_1$ and prove that each of these sets induces a digraph with bounded dichromatic number.
        
    Let $A=\bigcup_{v \in V_1}v^-$. By definition of $V^-$ and since $V_1 \subseteq V^-$, for every $v \in V_1$, $v^-$ has dichromatic number at most $(h+1)(c+\phi(c,h,t-1))$, and since $H_1$ has $h$ vertices we get that $\dic(A) \leq h \cdot (h+1) \cdot (c+\phi(c,h,t-1))$.
        
    Let $B=\bigcup_{v \in V_1}v^o$. Since $G$ is $(K_1+F, H_1 \Ra H_2)$-free, for every $v \in V_1$, $v^o$ is $(F, H_1 \Ra H_2)$-free and thus $\dic(G[v^o]) \leq c$. Hence, $\dic(B) \leq c \cdot h$.  
        
    Finally, let $C = V(G) \setminus (A\cup B \cup V_1)$. By definition of $A$ and $B$, we have $V_1 \Ra C$, hence $C$ is $H_2$-free and thus $\dic(C) \leq c$.  

    All together, we get that $\dic(G) \leq h + h \cdot (h+1) \cdot (c+\phi(c,h,t-1)) + ch +c =: \phi(c,h,t)$. 
\end{proof}

The proof of the theorem will follow from the second lemma below.

\begin{lemma}\label{lemma:cluster}
If $G\in \F(K_1+F, H_{1} \Rightarrow H_{2})$ contains a $(3c+1)$-cluster, then $\dic(G)\leq c \cdot 2^{f(3c+1)+1}$. 
\end{lemma}

\begin{proof}
Let $K$ be a $(3c+1)$-cluster in $G$. Assume there exists a vertex $u \in V(G)$ such that $u^-\cap V(K)$ is $H_1$-free and $u^+\cap V(K)$ is $H_2$-free. Since $u^o\cap V(K)$ is by assumption $(F , H_1 \Rightarrow H_2)$-free, we get a partition of $V(K)$ into three sets that induce digraphs with dichromatic number at most $c$, a contradiction (this still holds if $u\in K$ as we can add it to any of the sets without increasing the dichromatic number).

So, for every $u \in V(G)$, either $u^-\cap V(K)$ contains a copy of $H_1$, or $u^+\cap V(K)$ contains a copy of $H_2$. 
Now for every $V_1\subseteq V(K)$ such that $G[V_1]$ is isomorphic to $H_1$, the set of vertices $u$ such that $V_1 \subset u^-$ is $H_2$-free and therefore has dichromatic number at most $c$. 
Similarly, for every $V_2\subset V(K)$ such that $G[V_2]$ is isomorphic to $H_2$, the set of vertices $u$ such that $V_2 \subset u^+$ is $H_1$-free and therefore has dichromatic number at most $c$.
By doing this for every possible copy of  $H_1$ or $H_2$ inside $V(K)$ we can cover every vertex of $V(G)$. 
Moreover, the number of subsets of $V(K)$ that induces a copy of $H_1$ (resp. of $H_2$) is at most $2^{f(3c+1)}$. Hence, we get that $\dic(G)\leq c \cdot 2^{f(3c+1)+1}$. 
\end{proof}

\begin{proof}[of Theorem~\ref{thm:strongg}]
By Lemma~\ref{lemma:nocluster} and Lemma~\ref{lemma:cluster}, we get that every digraph in $\F(K_1+F, H_{1} \Rightarrow H_{2})$ has dichromatic number at most $\max(\phi(c,h,3c+1),2^{f(3c+1)+1} c)$, which proves Theorem~\ref{thm:strongg}.
\end{proof}

\begin{remark}
Let $K(c,h)$ an integer such that digraphs in $\F(K_1+F, H_{1} \Rightarrow H_{2})$ have dichromatic number at most $K(c,h)$. From the proof above we can deduce that taking 
$$K(c,h)=\max( (2h \cdot (h+1))^{5c+1} , 2 ^ {2 ^ {2 \cdot 3^{3c + 1}} + 1} \cdot c)$$ 
works (proving as intermediate steps that for every integer $t$, we can take $f(t) \leq 2 ^ {2 \cdot 3^t}$ and $\phi(c,h,t) \leq (2h \cdot (h+1))^{2c + t}$).
\end{remark}

\subsection{Growing a hero}\label{subsec:growing_heroes}

The goal of this subsection is to prove the following theorem:  
\begin{theorem}\label{thm:delta}
If $H$ is a hero in \omgs, then so is $\Delta(1, H, 1)$. 
\end{theorem}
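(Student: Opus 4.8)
The plan is to reduce the statement to a purely quantitative question about the ``apex sets'' of arcs, and then to bound the dichromatic number of an \omg all of whose apex sets are simple.

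First I would record a convenient reformulation. Reversing all arcs turns $\Delta(1,H,1)=\Delta(1,H,TT_1)$ into $\Delta(1,TT_1,\overline H)=\Delta(1,1,\overline H)$; since reversal preserves both membership in $\F(K_1+TT_2)$ and the dichromatic number, and since $\overline H$ is a hero in \omgs exactly when $H$ is, it suffices to prove that $\Delta(1,1,H)$ is a hero. Let $c$ be such that every $H$-free \omg has dichromatic number at most $c$. For an arc $u\to v$, the set $C_{uv}=v^+\cap u^-$ is exactly the set of $w$ with $u\to v\to w\to u$; hence an induced copy of $H$ inside $C_{uv}$, together with $u$ (as apex) and $v$ (as middle vertex), is precisely an induced $\Delta(1,1,H)$, all the required adjacencies being forced by $w\in v^+\cap u^-$. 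Therefore, if $G$ is a $\Delta(1,1,H)$-free \omg, then $G[C_{uv}]$ is $H$-free and so $\dic(C_{uv})\le c$ for every arc $uv$. The theorem then follows from the \emph{Core Lemma}: there is a function $g$ such that every \omg $G$ with $\dic(C_{uv})\le c$ for all arcs $uv$ satisfies $\dic(G)\le g(c)$.

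To attack the Core Lemma I would first pass to a vertex-critical $G$ (delete vertices one at a time keeping $\dic=d$); the apex hypothesis is hereditary, and in a vertex-critical digraph every vertex has in- and out-degree at least $d-1$. The second ingredient is the \omg-specific inequality $\dic(G)\le \dic(v^+)+\dic(v^-)+1$, which holds because the non-neighbours of $v$ together with $v$ lie in a single part and hence form an edgeless (acyclic) set; thus for every $v$ one of $\dic(v^+),\dic(v^-)$ is at least $(d-1)/2$, and by reversal symmetry I may assume some $v$ has $\dic(v^+)$ large. The point is then to bound $\dic(v^+)$ using the apex condition: for an in-neighbour $u$ of $v$ one has $v^+=(v^+\cap u^-)\cup(v^+\cap u^+)\cup(v^+\cap u^o)$, where the first block is exactly $C_{uv}$ (dichromatic number at most $c$), the third block is edgeless (dichromatic number at most $1$), and only the common-out-neighbourhood block $v^+\cap u^+$ can be complicated -- but it is strictly smaller than $v^+$.

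The main obstacle, and the place where the argument must genuinely exploit that the middle part of $\Delta(1,1,H)$ is a \emph{single} vertex, is to convert this one-step peeling into a bound independent of $d$. Iterating with in-neighbours $u_1,u_2,\dots$ of $v$ yields a nested sequence $v^+\supseteq v^+\cap u_1^+\supseteq v^+\cap u_1^+\cap u_2^+\supseteq\cdots$, each step costing only $c+1$, while $\{u_1,\dots,u_t,v\}$ dominates the current tail; the difficulty is to show this nesting stabilises after a number of steps bounded in terms of $c$ alone rather than after $\Theta(\log d)$ steps. I expect to force termination either by choosing the $u_i$ so that an over-long chain would place an induced copy of $H$ inside some apex set (producing a forbidden $\Delta(1,1,H)$), or by organising $V(G)$ directly into boundedly many apex sets and edgeless parts. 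This ``concentration into a single middle vertex'' is exactly what breaks down when a $TT_2$ must sit in the middle, matching the fact proved in Section~\ref{sec:counter_exemple} that $\Delta(1,2,C_3)$ and $\Delta(1,2,3)$ are \emph{not} heroes; making the termination rigorous is the heart of the proof.
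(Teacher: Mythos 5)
There is a genuine gap, and it is located exactly where you place it yourself: the ``Core Lemma''. Your reduction is sound as far as it goes --- it is true that in a $\Delta(1,1,H)$-free \omg every set $C_{uv}=v^+\cap u^-$ induces an $H$-free \omg and hence has dichromatic number at most $c$, and the paper uses this observation too. But by replacing the structural hypothesis ``$C_{uv}$ is $H$-free'' with the purely quantitative one ``$\dic(C_{uv})\le c$'', you have made the statement strictly \emph{stronger} than the theorem, not equivalent to it. Indeed, every hero in \omgs produced by Theorem~\ref{thm:main} has dichromatic number at most $2$ (the operation $H_1\Ra H_2$ gives $\max(\dic(H_1),\dic(H_2))$ and $\Delta(1,1,H)$ gives $\max(2,\dic(H))$), so for $c\ge 2$ there is no hero $D$ with $\dic(D)=c+1$ that would let you convert ``$\dic(C_{uv})\le c$'' back into ``$C_{uv}$ is $D$-free'' and invoke the theorem. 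The Core Lemma is therefore an independent claim that does not follow from the hero characterization, and you give no proof of it: the peeling argument $v^+\supseteq v^+\cap u_1^+\supseteq\cdots$ loses only $c+1$ colours per step but has no bounded termination, which is precisely the circularity you acknowledge. Worse, your proposed fix --- forcing a copy of $H$ into some apex set --- reintroduces the $H$-freeness that the Core Lemma had discarded, so it cannot be carried out inside the Core Lemma's hypotheses.

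The paper's proof shows why the structural information cannot be thrown away. It first invokes Theorem~\ref{thm:strong} to get that $H\Ra H$ is a hero, defines an $H$-jewel as an induced copy of $H\Ra H$, and takes a maximum $H$-jewel-chain $(J_1,\dots,J_n)$. The engine of the argument (Claim~\ref{claim:jewel-chain}) is a Ramsey-type dichotomy: a vertex $w$ with an out-neighbour in $J_j$ cannot have a copy of $H$ among its in-neighbours in $J_{j+1}$ (that would create a $\Delta(1,H,1)$), and since $J_{j+1}$ induces $H\Ra H$ and the non-neighbourhood of $w$ is a stable set, $w$ must have an out-neighbour in $J_{j+1}$. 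This monotonicity is what yields the layered partition $(X_1,\dots,X_{n+1})$ fed into Lemma~\ref{lem:backedge}, and every step of it uses that the jewels induce two stacked copies of $H$ --- information invisible to the hypothesis $\dic(C_{uv})\le c$. So either you must prove the Core Lemma (a statement that looks at least as hard as the theorem and may well be false), or you must retain the $H$-freeness of the sets $C_{uv}$ and build some analogue of the jewel-chain machinery; as written, the proposal does neither.
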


The next lemma is proved in~\cite{hero} (see \textbf{(4.2)}) for tournaments but actually holds for every digraph. 
\begin{lemma}\label{lem:backedge}
Let $G$ a digraph and let $(X_1, \dots, X_n)$ a partition of $V(G)$. Suppose that $d$ is an integer such that:
\begin{itemize}
    \item $\forall\, 1 \leq i \leq n\ \dic(X_i) \leq d$ , %
    \item $\forall\, 1 \leq i < j  \leq n$ , if there is an arc $uv$ with $u \in X_j$ and $v \in X_i$, then $ \dic(X_{i+1} \cup X_{i+2} \cup \dots \cup X_j) \leq d $.
\end{itemize}
Then $\dic(G) \leq 2d$. 
\end{lemma}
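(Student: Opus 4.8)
The plan is to cut the ordered partition into maximal low‑dichromatic intervals and then $2$-colour these intervals by parity. Concretely, I would first group the parts greedily from the left: set $a_0=0$ and, having defined $a_t$, let $a_{t+1}$ be the largest index $r$ with $\dic(X_{a_t+1}\cup\cdots\cup X_r)\le d$. Writing $I_t=\{a_{t-1}+1,\dots,a_t\}$, this produces consecutive intervals $I_1,\dots,I_m$ partitioning $\{1,\dots,n\}$; each is nonempty (a single part already has $\dic\le d$ by the first hypothesis) and satisfies $\dic\big(\bigcup_{k\in I_t}X_k\big)\le d$ by construction. The guiding idea is that if no backward arc ever skips a whole interval, then the ``interval incidence graph'' of backward arcs only joins consecutive intervals, hence is bipartite, and the even/odd classes each see only forward arcs between distinct intervals.

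The crucial step, and the one I expect to be the main obstacle, is to show that backward arcs never skip an interval: if there is an arc from $X_j$ to $X_i$ with $i<j$, $X_i$ lying in $I_t$ and $X_j$ in $I_s$, then $s\le t+1$. Suppose not, so $s\ge t+2$ and in particular $j>a_{t+1}$. The second hypothesis applied to this backward arc gives $\dic(X_{i+1}\cup\cdots\cup X_j)\le d$, and since $i\le a_t$ this set contains $X_{a_t+1}\cup\cdots\cup X_j$; by monotonicity of $\dic$ under taking induced subgraphs we get $\dic(X_{a_t+1}\cup\cdots\cup X_j)\le d$. But the index set $\{a_t+1,\dots,j\}$ strictly contains $I_{t+1}=\{a_t+1,\dots,a_{t+1}\}$, contradicting the maximal choice of $a_{t+1}$. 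Hence every backward arc joining two \emph{distinct} intervals joins consecutive ones.

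Finally I would split by parity, $Y=\bigcup_{t\text{ odd}}\bigcup_{k\in I_t}X_k$ and $Z=\bigcup_{t\text{ even}}\bigcup_{k\in I_t}X_k$, and argue $\dic(G[Y])\le d$ (symmetrically for $Z$). Reading any directed cycle of $G[Y]$ cyclically, the interval indices of its vertices must return to their start, so if they are not all equal the cycle contains an arc that strictly decreases the interval index, i.e. a backward arc between two distinct intervals of $Y$; but distinct odd indices differ by at least $2$, which the previous claim forbids. Thus every directed cycle of $G[Y]$ is confined to a single interval $I_t$, and colouring each such interval with a common palette $\{1,\dots,d\}$ produces an acyclic class in each colour, since a monochromatic cycle would live inside one interval, where the colouring is a genuine $d$-dicolouring. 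Hence $\dic(G[Y])\le d$ and likewise $\dic(G[Z])\le d$. Using disjoint palettes $\{1,\dots,d\}$ on $Y$ and $\{d+1,\dots,2d\}$ on $Z$ then yields a $2d$-dicolouring of $G$, because every colour class lies entirely in $Y$ or in $Z$ and is acyclic there, giving $\dic(G)\le 2d$.
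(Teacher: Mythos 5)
Your proposal is correct and follows essentially the same route as the paper: the same greedy left-to-right decomposition into maximal intervals of dichromatic number at most $d$, the same maximality argument showing no backward arc can skip an interval, and the same parity split into two classes each of dichromatic number at most $d$. Your write-up merely spells out in more detail the final step that directed cycles in each parity class are confined to a single interval, which the paper leaves implicit.
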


\begin{proof}
Define a sequence $s_0<s_1<...<s_t=n$ defined recursively as follows: $s_0=0$ and 
$$s_{k}=\max\{ j>s_{k-1} \mid \dic(\bigcup_{s_{k-1} < i \leq j} X_i)\leq  d \}$$
for $k=1,\dots t$,  and let  $Y_k=\bigcup_{s_{k-1} < i \leq s_k} X_i$. 
By definition of the sequence $s_k$, $\dic(Y_k)\leq d$ for $k=1, \dots, t$ and $\dic(Y_k\cup X_{s_k+1})>d$ for $k=1, \dots, t-1$, so by the assumption of the lemma, there cannot be an arc from $Y_j$ to $Y_i$ whenever $i \leq j-2$.  Hence, $\bigcup_{i\ even} Y_i$ and $\bigcup_{i\ odd}Y_i$ both have dichromatic number at most $d$, and thus $\dic(G) \leq 2d$. 
\end{proof}

The following is an adaptation of \textbf{(4.4)} in~\cite{hero} with \omgs instead of tournaments  (note also that their proof is concerned with $\Delta(1, k, H)$ while ours is concerned with $\Delta(1, 1, H)$).

\begin{lemma}\label{lem:N+N-backedge}
Let $G$ be a $\Delta(1, 1, H)$-free  \omg given with a partition $(X_1, \dots, X_n)$ of its vertex set $V(G)$. Suppose that $r$ is an integer such that: 
\begin{itemize}
    
   \item $H$-free \omgs have dichromatic number at most $r$, 
   \item $\forall\, 1 \leq i \leq n\ \dic(X_i) \leq r$,
    \item $\forall\, 1 \leq i\leq n \ \forall v \in X_i \ \dic(v^+ \cap (X_1 \cup \dots \cup X_{i-1})) \leq r$,
    \item $\forall\, 1 \leq i\leq n \ \forall v \in X_i \ \dic(v^- \cap (X_{i+1} \cup \dots \cup X_{n})) \leq r$.
\end{itemize}
Then $\dic(G)\leq 8r+4$. 
\end{lemma}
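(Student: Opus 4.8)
The plan is to apply Lemma~\ref{lem:backedge} with $d = 4r + 2$, so that its conclusion gives $\dic(G) \le 2d = 8r+4$ at once. The first hypothesis of that lemma holds trivially, since $\dic(X_i) \le r \le d$ for every $i$. The entire content is therefore to verify the second hypothesis: for every back-arc, i.e. every arc $uv$ with $u \in X_j$, $v \in X_i$ and $i < j$, the interval $W := X_{i+1} \cup \dots \cup X_j$ must satisfy $\dic(W) \le 4r+2$.

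The key step, and the only place where $\Delta(1,1,H)$-freeness enters, is the claim that $\dic(C_{uv}) \le r$, where $C_{uv} = v^+ \cap u^-$ is the set of vertices forming a directed triangle with the arc $uv$. Indeed, suppose $G[C_{uv}]$ contained an induced copy $S$ of $H$ (that is, $G[S] \cong H$). Since $u \to v$, and $v \Ra S$ because $S \subseteq v^+$, and $S \Ra u$ because $S \subseteq u^-$, the set $\{u,v\} \cup S$ would induce exactly $\Delta(1,1,H)$ (with $u$ as the apex, $v$ as the copy of $TT_1$, and $S$ as the copy of $H$), contradicting the hypothesis on $G$. Hence $G[C_{uv}]$ is an $H$-free \omg, and the first bullet yields $\dic(C_{uv}) \le r$.

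With this in hand I would bound $\dic(W)$ by splitting $W$ according to each vertex's adjacency to $v$ and then to $u$. Since $G$ is an \omg, the non-neighbourhood of any vertex is an independent set and hence induces an acyclic digraph, so $\dic(v^o \cap W) \le 1$ and $\dic(u^o \cap W) \le 1$. The remaining pieces are handed to us by the hypotheses: $\dic(v^- \cap W) \le r$ because $W \subseteq X_{i+1} \cup \dots \cup X_n$ (fourth bullet applied to $v \in X_i$); $\dic(v^+ \cap u^- \cap W) \le \dic(C_{uv}) \le r$ by the claim; and $\dic(v^+ \cap u^+ \cap W) \le \dic(u^+ \cap W) \le 2r$, since $u^+ \cap W$ is covered by $u^+ \cap (X_{i+1} \cup \dots \cup X_{j-1})$ (bounded by $r$ via the third bullet applied to $u \in X_j$) together with $u^+ \cap X_j \subseteq X_j$ (bounded by $r$). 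The five sets $v^- \cap W$, $v^o \cap W$, $v^+ \cap u^- \cap W$, $v^+ \cap u^o \cap W$ and $v^+ \cap u^+ \cap W$ cover $W$, so summing gives $\dic(W) \le r + 1 + r + 1 + 2r = 4r + 2$, as required.

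The main obstacle is really just the single claim $\dic(C_{uv}) \le r$: once one sees that a copy of $H$ inside the common triangle set $C_{uv}$ completes a forbidden $\Delta(1,1,H)$ together with the two endpoints of the back-arc, the rest is a routine partition of the interval whose five pieces are each controlled by the four hypotheses plus the independence of non-neighbourhoods in an \omg. The point I would double-check is the index bookkeeping in the final step: that $W$ genuinely lies inside the ranges appearing in the third and fourth bullets, and that $u \in X_j \subseteq W$ is correctly accounted for (it falls into $v^- \cap W$ since $u \to v$), as this is where an off-by-one error could hide.
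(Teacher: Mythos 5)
Your proposal is correct and follows essentially the same route as the paper: apply Lemma~\ref{lem:backedge} with $d=4r+2$, observe that a copy of $H$ in $v^+\cap u^-$ would complete a forbidden $\Delta(1,1,H)$ with the back-arc $uv$, and cover the interval by the two hypothesis sets, the two stable non-neighbourhoods, and the $H$-free triangle set. The only (cosmetic) difference is bookkeeping: the paper splits off $X_j$ first and partitions the rest via $Q\cup u^+\cup v^-\cup u^o\cup v^o$, while you fold $X_j$ into the $u^+$ piece; both yield the same bound $4r+2$.
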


\begin{proof} 
We are going to prove that $G$ satisfies the hypothesis of Lemma~\ref{lem:backedge} with $d = 4r+2$, which implies the result.   
Let $uv$ be an arc such that $u \in X_j$ and $v \in X_i$ where $1 \leq i < j \leq n$.
We want to prove that $ \dic(X_{i+1} \cup X_{i+2} \cup \dots \cup X_j) \leq 4r+2 $. 
Let $W = X_{i+1} \cup \dots \cup X_{j-1}$. Let $Q = v^+ \cap u^- \cap W$.  If $Q$ contains a copy of $H$, then together with $u$ and $v$ it forms a $\Delta(1, H, 1)$, a contradiction. So $Q$ is $H$-free and thus is $r$-colourable.
Now, each vertex in $W \setminus Q$ is in $u^+ \cup v^- \cup u^o \cup v^o$. 
By hypothesis, $u^+ \cap W$ and $v^- \cap W$ are both $r$-colourable, and since $G$ is an \omg, $u^o$ and $v^o$ are stable sets. Finally, by hypothesis, $\dic(X_j) \leq r$. All together, we get that $\dic( X_{i+1} \cup \dots \cup X_j) \leq 4r+2$ as announced. 
\end{proof}

\begin{proof}[of Theorem~\ref{thm:delta}]
Let $H$ be a hero in \omgs and let $h = |V(H)|$. By applying Theorem \ref{thm:strong} with $H_1 = H_2 = H$, we get that $H \Ra H$ is a hero in \omgs. Applying it again with $H_1 = H_2 = H \Ra H$, we get that $(H \Ra H) \Ra (H \Ra H)$ is a hero in \omgs. 
So there exists a constant $c$ such that every $\big((H \Ra H) \Ra (H \Ra H)\big)$-free \omg has dichromatic number at most $c$. Note that it also implies that every $H$-free  \omg has dichromatic number at most $c$. 

Let $G$ be a $\Delta(1, 1, H)$-free \omg. 
We are going to prove that $\dic(G) \leq 8r + 4$ for some $r$, using Lemma~\ref{lem:N+N-backedge}

We say that $J\subseteq V(G)$ is an \emph{$H$-jewel} if $G[J]$ is isomorphic to $H\Ra H$. 
The important feature about an $H$-jewel $J$ in an \omg is that, for any vertex $x$ not in $J$, either $x^+ \cap J$ or $x^- \cap J$ contains a copy of $H$, or $x$ has both an in- and an out-neighbour in $J$. 
An \emph{$H$-jewel-chain} of length $n$ is a sequence $(J_1, \dots, J_n)$ of pairwise disjoint $H$-jewels such that for  $i=1, \dots, n-1$, $J_i \Rightarrow J_{i+1}$, and for  every $1 \leq i < j \leq n$, $J_i \rightarrow J_j$. 
Both notions of $H$-jewel and $H$-jewel-chain exist in~\cite{hero}, the ones we give here are slightly different, but are morally similar.

Consider an $H$-jewel-chain $(J_1, \dots, J_n)$ of maximum length $n$. 
Set $J = J_1 \cup \dots \cup J_n$ and $W=V(G) - J$. To simplify statements, we also consider sets $J_i$ for $i\leq 0$ and $i \geq n+1$, that are assumed to be empty. 

The easy but key properties of an $H$-jewel-chain are stated in the following claim.

\begin{claim}\label{claim:jewel-chain}
For every $w\in W$ and $1\leq j \leq n-1$:
\begin{itemize}
\item $w^+\cap J_j \neq\emptyset \, \Ra \,  w^+\cap J_{j+1} \neq\emptyset$, 
\item $w^-\cap J_{j+1} \neq\emptyset\, \Ra \, w^-\cap J_{j} \neq\emptyset$. 
\end{itemize}

\end{claim}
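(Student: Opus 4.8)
The plan is to read the two bullets as the assertion that, along the chain, the out-neighbourhoods of $w$ can only \emph{start} appearing and its in-neighbourhoods can only \emph{stop}: I would show that an out-neighbour of $w$ in $J_j$ forces one in $J_{j+1}$, and dually for in-neighbours. (For $j=n$ the term $J_{n+1}$ is empty by the stated convention, so I read the first bullet as the propagation statement for the indices where the next jewel actually exists, $1\le j\le n-1$, and symmetrically the second bullet for $2\le j\le n$.) Two facts drive everything: the key feature of an $H$-jewel recalled just above the claim (every vertex outside a jewel $J$ dominates a copy of $H$ inside $J$, or is dominated by one, or has both an in- and an out-neighbour in $J$), and the fact that consecutive jewels are completely joined, $J_j \Ra J_{j+1}$.

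First I would fix $w\in W$ and $1\le j\le n-1$ together with an out-neighbour $a\in J_j$, and apply the key feature of the jewel $J_{j+1}$ to $w$, which yields three (not mutually exclusive) cases. If $w$ has both an in- and an out-neighbour in $J_{j+1}$, or if $w^+\cap J_{j+1}$ contains a copy of $H$, then $w^+\cap J_{j+1}\neq\emptyset$ and we are done; since the key feature guarantees that at least one of the three cases holds, it suffices to show that the remaining case---that $w^-\cap J_{j+1}$ contains a copy $H'$ of $H$---is impossible.

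To do this I would exhibit a forbidden hero on $\{w,a\}\cup H'$. Since $a\in J_j$, $H'\subseteq J_{j+1}$ and $J_j\Ra J_{j+1}$, every vertex of $H'$ is an out-neighbour of $a$, so $a\Ra H'$; by the choice of $a$ we have $w\to a$; and $H'\subseteq w^-$ gives $H'\Ra w$. As $w\notin J$, while $a\in J_j$ and $H'\subseteq J_{j+1}$ are pairwise disjoint, and $G[H']\cong H$, the set $\{w,a\}\cup H'$ induces exactly $\Delta(1,1,H)$ (apex $w$, single middle vertex $a$, terminal copy $H'$), contradicting that $G$ is $\Delta(1,1,H)$-free. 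Hence the problematic case cannot occur, so $w^+\cap J_{j+1}\neq\emptyset$. The second bullet is entirely parallel: for an in-neighbour $b\in J_j$ I would apply the key feature to $J_{j-1}$, and the only bad case ($w^+\cap J_{j-1}$ containing a copy $H''$ of $H$) gives, through $J_{j-1}\Ra J_j$, the relations $w\Ra H''$, $H''\Ra b$ and $b\to w$, i.e.\ an induced $\Delta(1,H,1)$, which $G$ likewise excludes.

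The only subtle point---and the step I expect to be the crux---is this bad case: one must notice that complete domination between consecutive jewels supplies precisely the missing middle arcs $a\Ra H'$ (respectively $H''\Ra b$), so that an out-neighbour in $J_j$ together with a copy of $H$ dominating $w$ inside $J_{j+1}$ already hides an induced hero. Everything else (pairwise disjointness of the three parts, and that the induced subgraph is precisely the claimed $\Delta$ with no stray arcs or digons) is immediate because $G$ is an \omg and the jewels are pairwise disjoint.
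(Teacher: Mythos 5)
Your proof is correct and takes essentially the same route as the paper: both arguments hinge on the observation that a copy of $H$ in $w^-\cap J_{j+1}$, together with an out-neighbour of $w$ in $J_j$ and the domination $J_j\Rightarrow J_{j+1}$, would yield an induced $\Delta(1,1,H)$, and then use the structure of the jewel $H\Rightarrow H$ inside an \omg to extract an out-neighbour of $w$ in $J_{j+1}$ (dually for the second bullet). The only differences are presentational: you invoke the ``key feature'' of jewels stated just before the claim as a black box where the paper rederives it inline, and you treat the vacuous $j=n$ boundary case slightly more explicitly.
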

\begin{subproof}
Assume $w^+\cap J_j \neq\emptyset$. Then since $J_j\Ra J_{j+1}$, it is not possible that $G[w^-\cap J_{j+1}]$ contains a copy of $H$ for it would create a $\Delta(1, H, 1)$. Since $G[J_{j+1}]$ is isomorphic to $H\Ra H$, and since $w$ cannot have a non neighbour in both copies of $H$ (because $G$ is an \omg), this implies that $w$ has at least one out-neighbour in $J_{j+1}$. The proof of the second item is identical up to reversal of the arcs.
\end{subproof}

For every $w \in W$, let $g(w)$ be the smallest integer $j$ such that $w^+\cap J_j \neq\emptyset$ if such an integer exists, and $g(w)=n+1$ if no such integer exists. 
For $j=1, \dots, n+1$, set $W_j = \{w:  g(w)=j\}$ and $X_j = J_j \cup W_j$. 
Note that, by definition of the $W_j$'s, if $w \in W_j$, then $J_i \ra w$ for every $i \leq j-1$. 
\vspace{0.3cm}

\begin{claim}\label{claim:X}
$\dic(X_j) \leq  4c \cdot h^2+ c + 6h $ for $j=1, \dots, n+1$. 
\end{claim}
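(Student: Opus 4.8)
The plan is to bound $\dic(X_j)=\dic(J_j\cup W_j)$ by treating $J_j$ and $W_j$ separately and adding the bounds. For $J_j$ there is nothing to do: since $G[J_j]$ is isomorphic to $H\Ra H$ it has at most $2h$ vertices, so $\dic(J_j)\le 2h$, which already accounts for part of the additive $6h$ term. The whole content is therefore to bound $\dic(W_j)$ by $O(c\,h^2)$, and for this I would exploit two facts simultaneously: that $G$ is $\Delta(1,1,H)$-free, and that $(J_1,\dots,J_n)$ is a jewel-chain of \emph{maximum} length. (The boundary case $j=n+1$, where $J_{n+1}$ is empty, reduces to bounding $\dic(W_{n+1})$ and is handled the same way, with the $2h$ contribution simply absent.)

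First I would classify each $w\in W_j$ according to how it meets the two copies $A_j,B_j$ of $H$ inside $J_j=A_j\Ra B_j$, using the key feature of an $H$-jewel recalled just before the claim: for $w\notin J_j$, either $w^+\cap J_j$ contains a copy of $H$, or $w^-\cap J_j$ contains a copy of $H$, or $w$ has both an in- and an out-neighbour in $J_j$. Since every $w\in W_j$ already has an out-neighbour in $J_j$ (as $c(w)=j$), this splits $W_j$ into a part $W^{\mathrm{out}}$ whose vertices out-dominate a copy of $H$ in $J_j$, and a part $W^{\mathrm{mix}}$ whose vertices have both an in- and an out-neighbour in $J_j$. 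For $W^{\mathrm{out}}$ the aim is to show $G[W^{\mathrm{out}}]$ is $(H\Ra H)$-free: if it contained an $H$-jewel $J'$, then combining $J'$ with the $H$-copies it out-dominates in $J_j$, and with the in-copies that the jewel feature forces in $w^-\cap J_{j-1}$ (recall $J_{j-1}\to w$ since $c(w)=j$), I would assemble a jewel-chain of length $n+1$, contradicting maximality; hence $\dic(W^{\mathrm{out}})\le c$.

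For $W^{\mathrm{mix}}$ I would use $\Delta(1,1,H)$-freeness as a covering device, in the spirit of the proof of Lemma~\ref{lemma:cluster}. The point is that if a vertex $w$ has an out-neighbour $y\in A_j$ together with $B_j\Ra w$, then $w\to y$, $y\Ra B_j$ and $B_j\Ra w$ form a $\Delta(1,1,H)$, which is forbidden. Running this observation over the vertices of $J_j$ lets me cover $W^{\mathrm{mix}}$ by at most $|A_j|\cdot|B_j|\le(2h)^2$ sets, each indexed by a pair (an out-neighbour in $J_j$, a witnessing vertex of the complementary copy) and each forced to be $H$-free, hence of dichromatic number at most $c$; this is precisely the $4c\,h^2$ term. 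Absorbing the remaining non-neighbours of $J_j$, which form stable sets because $G$ is an \omg, into bounded pieces accounts for the leftover additive constants $c$ and the rest of $6h$, and summing $\dic(J_j)$, $\dic(W^{\mathrm{out}})$ and $\dic(W^{\mathrm{mix}})$ yields the claimed $4c\,h^2+c+6h$.

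The step I expect to be the main obstacle is the jewel-chain extension argument for $W^{\mathrm{out}}$, because a jewel found inside $W_j$ interacts with $J_{j-1}$ and $J_j$ only through \emph{partial} adjacency — each of its vertices merely has an out-neighbour in $J_j$ and an induced copy of $H$ in its in-neighbourhood in $J_{j-1}$ — whereas the definition of a jewel-chain demands \emph{complete} domination $J_i\Ra J_{i+1}$ between consecutive jewels. Bridging this gap is where the \omg structure is essential: since the non-neighbours of any vertex lie in a single part and hence form a stable set, the missing adjacencies are confined to one independent set, so after discarding or recolouring a number of vertices bounded in terms of $h$ only, partial domination can be upgraded to the complete domination needed to insert the new jewel. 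Making this upgrade precise while keeping the discarded set small enough that it contributes only a bounded dichromatic overhead is the crux of the argument.
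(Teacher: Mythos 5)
Your decomposition of $W_j$ by its relation to $J_j$ itself does not close, and the step you yourself flag as the crux is a genuine gap, not a technicality. To insert a single new jewel $J'\subseteq W^{\mathrm{out}}$ into the chain adjacent to $J_j$ you need at least $J'\to J_j$, but membership in $W^{\mathrm{out}}$ only guarantees that each $w$ out-dominates \emph{some} $h$-vertex copy of $H$ inside the $2h$-vertex set $J_j$; the remaining $h$ vertices of $J_j$ may perfectly well be \emph{in-neighbours} of $w$ (e.g.\ $w$ out-dominates the second copy $B_j$ and is in-dominated by the first copy $A_j$ --- this configuration creates no forbidden $\Delta$). Your proposed repair handles only non-adjacency, which the \omg structure confines to stable sets, but the obstruction here is wrongly-oriented adjacency, and the set of $w\in W_j$ with an in-neighbour in $J_j$ is not bounded, not stable, and not controlled by anything you have identified. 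The paper's proof never needs this: it removes from $W_j$ the set $A$ of vertices with an in-neighbour in $J_{j+1}$ (covered by at most $4h^2$ $H$-free sets $A_{ab}$ with $a\in J_j$, $b\in J_{j+1}$, using the triangle $a\to b\to w\to a$ and $\Delta(1,H,1)$-freeness) and the set $B$ of vertices with a non-neighbour in $J_{j-1}\cup J_{j+1}$ (at most $4h$ stable sets); the residue $C$ then satisfies $J_{j-1}\Ra C\Ra J_{j+1}$ together with $J_i\to C$ for $i<j$ and $C\to J_k$ for $k>j$, and a jewel-chain of length \emph{two} inside $C$ is substituted \emph{in place of} $J_j$. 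Since $J_j$ is deleted from the new chain, the relation between the new jewels and $J_j$ --- exactly the thing you cannot control --- never has to be verified. Note also that the conclusion obtainable this way is ``$C$ contains no jewel-chain of length $2$'', which is weaker than your claim that $W^{\mathrm{out}}$ is $(H\Ra H)$-free; the latter is not what maximality gives you.

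Two further points. First, your covering of $W^{\mathrm{mix}}$ is incomplete: the pair (out-neighbour $y$, in-neighbour $z$) produces a forbidden $\Delta$ only when $y\to z$, i.e.\ when $y$ lies in the first copy $A_j$ and $z$ in the second copy $B_j$; a vertex of $W_j$ whose out-neighbours in $J_j$ all lie in $B_j$ and whose in-neighbours all lie in $A_j$ is covered by none of your $4h^2$ sets, and such vertices are consistent with $\Delta(1,H,1)$-freeness. Second, for $j=n+1$ the jewel $J_{n+1}$ is empty, so your classification of $W_{n+1}$ by adjacency to $J_{n+1}$ is vacuous and the case is not ``handled the same way''; in the paper it goes through precisely because the partition of $W_j$ is driven by $J_{j-1}$ and $J_{j+1}$ rather than by $J_j$.
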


\begin{subproof}
Let $1 \leq j \leq n+1$.
We have $\dic(J_j) \leq |J_j| \leq  2h$.  

 For each pair of vertices $a \in J_{j}$ and $b \in J_{j+1}$, 
set $A_{ab} = \{w \in W_j: bw, \, wa \in A(G)\}$. Since $ab \in A(G)$ (because $J_j \Rightarrow J_{j+1}$), and $G$ is $\Delta(1, H, 1)$-free, $A_{ab}$ must be $H$-free and thus is $c$-colourable for every choice of $a$ and $b$. 
Setting $A = \bigcup_{(a,b) \in J_j\times J_{j+1}} A_{ab}$, we get that $\dic(A) \leq 4 h^2 \cdot c$. Moreover, since every vertex in $W_j$ has an out-neighbour in $J_j$, we have $A = \{w \in W_j : w^- \cap J_{j+1} \neq \emptyset \}$.

Let $B = \{w \in W_j: w^o \cap J_{j-1} \neq \emptyset \text{ or }w^o \cap J_{j+1} \neq \emptyset \}$, in other words $B$ is the set of vertices in $W_j$ with at least one non-neighbour in $J_{j-1}$ or $J_{j+1}$. Since $G$ is an \omg, we have $\dic(B) \leq |J_{j-1}| + |J_{j+1}| \leq  4h$. 

Let $C = W_j \setminus (A \cup B)$. By definition of $W_j$, for every $i \leq j-1$, $J_i \rightarrow C$. 
Since $C$ is disjoint from $A$, we have $C \ra J_{j+1}$, and thus, by claim~\ref{claim:jewel-chain} (second bullet), we have $C \ra J_k$ for every $k \geq j+1$. 
Finally, since $C$ is disjoint from $B$, we have furthermore $J_{j-1} \Rightarrow C$ and $C \Rightarrow J_{j+1}$.  
Now, if $C$ contains an $H$-jewel-chain $(J'_1,J'_2)$ of length $2$, then $(J_1, \dots, J_{j-1}, J'_1, J'_2, J_{j+1}, \dots, J_n)$ is an $H$-jewel-chain of size $n+1$, contradicting the maximality of $n$. Hence, $C$ does not contain a jewel-chain of size $2$ and thus $\dic(C)\leq c$.

All together, we get that $\dic(X_j) \leq 4c \cdot h^2+ c + 6h$. 
\end{subproof}

\begin{claim}\label{clm:bounded_dichromatic_neighbours_J}
For $j=1, \dots, n$ and for every $u \in J_j$, 
\begin{itemize}
    \item $ \dic \big(u^+ \cap (X_1 \cup \dots \cup X_{j-1}) \big) \leq 4c \cdot h^2+  2c \cdot h + c + 6h $, and
    \item $ u^- \cap (X_{j+1} \cup \dots \cup X_{n+1}) = \emptyset $.
\end{itemize}
\end{claim}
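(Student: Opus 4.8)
The plan is to read off both bullets from the indexing map $c(\cdot)$ and from the $\Delta(1,1,H)$-freeness of $G$, using the monotonicity recorded in Claim~\ref{claim:jewel-chain}. Throughout fix $j$ with $1 \le j \le n$ and $u \in J_j$; for the first bullet the case $j=1$ is vacuous, so there I would assume $j \ge 2$. Note also that $H$, being a hero, is a hero in tournaments and hence is a tournament, which is what lets me invoke $\Delta(1,1,H)$.

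I would dispatch the second bullet first, since it is purely combinatorial. Take $k \ge j+1$ and recall $X_k = J_k \cup W_k$. Since $J_j \rightarrow J_k$, no arc enters $J_j$ from $J_k$, so $u^- \cap J_k = \emptyset$. If $w \in W_k$ then $c(w) = k > j$, so by minimality of $c(w)$ we have $w^+ \cap J_j = \emptyset$; in particular $wu \notin A(G)$, whence $w \notin u^-$ and $u^- \cap W_k = \emptyset$. Therefore $u^- \cap X_k = \emptyset$ for every $k \ge j+1$, which is exactly the second bullet.

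For the first bullet I would set $N = u^+ \cap (X_1 \cup \dots \cup X_{j-1})$. Because $J_i \rightarrow J_j$ for $i<j$ forces $u^+ \cap J_i = \emptyset$, we have $N = u^+ \cap (W_1 \cup \dots \cup W_{j-1})$. The key step is to observe that every $w \in N$ has an out-neighbour in $J_{j-1}$: if $w \in W_i$ with $i \le j-1$ then $w^+ \cap J_i \neq \emptyset$ by definition of $c(w)=i$, and iterating the first item of Claim~\ref{claim:jewel-chain} pushes this forward to $w^+ \cap J_{j-1} \neq \emptyset$. Next, for each $a \in J_{j-1}$ I would consider $S_a = \{w \in N : wa \in A(G)\}$ and argue it is $H$-free: if $G[S_a]$ contained a copy $H'$ of $H$, then $a \rightarrow u$ (as $J_{j-1} \Rightarrow J_j$), $u \Rightarrow H'$ (as $H' \subseteq N \subseteq u^+$) and $H' \Rightarrow a$ (by definition of $S_a$), so the set $\{a\} \cup \{u\} \cup V(H')$ would induce a $\Delta(1,1,H)$ with apex $a$ and middle vertex $u$, contradicting $\Delta(1,1,H)$-freeness. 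Hence $\dic(S_a) \le c$, and since the previous observation gives $N = \bigcup_{a \in J_{j-1}} S_a$, a union of $|J_{j-1}| = 2h$ sets, I would conclude $\dic(N) \le 2ch \le 4ch^2 + 2ch + c + 6h$.

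The one delicate point is the observation that every $w$ in $N$ reaches $J_{j-1}$ by an out-arc; this is exactly where Claim~\ref{claim:jewel-chain} and the inequality $i \le j-1$ must be combined, and it is what makes the bound collapse all the way to $2ch$. Everything else is a single application of $\Delta(1,1,H)$-freeness to the triple $(a,u,H')$, so I expect no real computational obstacle. If one preferred to avoid the monotonicity step and instead land on the stated bound term by term, the same $\Delta(1,1,H)$-argument applied to pairs $(a,b) \in J_{j-1} \times J_j$, together with the non-neighbour sets handled as unions of stable sets exactly as in Claim~\ref{claim:X}, reproduces a bound of the advertised shape $4ch^2 + 2ch + c + 6h$.
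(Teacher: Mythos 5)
Your proof is correct and follows essentially the same route as the paper: the second bullet is handled identically (no in-neighbours in $J_k$ by the chain orientation, none in $W_k$ by minimality of $c(w)$), and the first bullet rests on the same two ingredients, namely that Claim~\ref{claim:jewel-chain} pushes every out-neighbour forward so that each $w$ in the relevant set sees some $a\in J_{j-1}$, and that $a^-\cap u^+$ must be $H$-free lest $\{a,u\}\cup V(H')$ induce a $\Delta(1,1,H)$. The only (harmless) deviation is that you fold $W_{j-1}$ into the covering by the sets $S_a$ instead of splitting off $X_{j-1}$ via Claim~\ref{claim:X}, which actually yields the sharper bound $2c\cdot h$.
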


\begin{subproof} 
Let $1 \leq j \leq n$ and let $u \in J_j$. 
We first prove the first  bullet.  
By definition of an $H$-jewel-chain, $u$ has no out-neighbor in any $J_i$ for $i \leq j-1$ and by Claim \ref{claim:X}, $\dic(X_{j-1}) \leq  4c \cdot  h^2+c+ 6h$. 
So it is enough to prove that $A =  u^+ \cap (W_1 \cup \dots \cup W_{j-2})$ has dichromatic number at most $2c \cdot h$. 
By Claim \ref{claim:jewel-chain}, every vertex of $W_1 \cup \dots \cup W_{j-2}$ has an out-neighbour in $J_{j-1}$.  
Moreover, for every $v \in J_{j-1}$, we have $vu \in A(G)$ (because $J_{j-1} \Ra J_j$)  and $v^- \cap A$ is $H$-free, for otherwise a copy of $H$ in $v^- \cap A$ would form, together with $v$ and $u$, a $\Delta(1, H, 1)$. So $\dic(A) \leq |J_{j-1}|\cdot c = 2c \cdot h$ as needed. 

To prove the second bullet, observe that for every $k \geq j+1$, since $J$ is a jewel-chain, $u$ has no in-neighbour in $J_k$ and by definition of $W_k$, $u$ has no in-neighbour in $W_k$. 
\end{subproof}

\begin{claim}\label{clm:bounded_dichromatic_neighbours_W}
For $j=1, \dots, n +1$ and for every $w \in W_j$, 
\begin{itemize}
    \item $ \dic\big(w^+ \cap (X_1 \cup \dots \cup 
X_{j-1}) \big) \leq 8c \cdot h^2+  2c \cdot h + 2c + 12h $, and
\item $ \dic\big(w^- \cap (X_{j+1} \cup \dots \cup X_{n+1}) \big) \leq 8c \cdot h^2 + 2c + 12h$.
\end{itemize}
\end{claim}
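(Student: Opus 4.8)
The plan is to use a convenient reformulation of the forbidden pattern. Since $\Delta(1,1,H)$ occurs in $G$ exactly when some arc $u\ra v$ has $C_{uv}=v^+\cap u^-$ containing an induced copy of $H$, the hypothesis that $G$ is $\Delta(1,1,H)$-free is equivalent to: for every arc $u\ra v$ the \emph{completer set} $C_{uv}=v^+\cap u^-$ is $H$-free, and hence, being an induced sub-omg, satisfies $\dic(C_{uv})\le c$. Every bound below will be obtained by covering the set in question by boundedly many such completer sets.

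For the first bullet I would first simplify the target. Since $c(w)=j$, the vertex $w$ has no out-neighbour in any $J_i$ with $i\le j-1$, so $w^+\cap J_i=\emptyset$ there and therefore $w^+\cap(X_1\cup\dots\cup X_{j-1})=A$, where $A:=w^+\cap(W_1\cup\dots\cup W_{j-1})$. By Claim~\ref{claim:jewel-chain} every $x\in A$ (lying in some $W_i$ with $i\le j-1$) has an out-neighbour in $J_{j-1}$, and recall $J_{j-1}\ra w$. Thus for each $v\in J_{j-1}$ adjacent to $w$ we have $v\ra w$, so $\{x\in A: x\ra v\}\subseteq w^+\cap v^-=C_{vw}$ is $H$-free; the union over the at most $|J_{j-1}|\le 2h$ such $v$ covers every $x\in A$ having an out-neighbour of $J_{j-1}$ adjacent to $w$, contributing dichromatic number at most $2hc$.

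The remaining vertices $x\in A$ send all their $J_{j-1}$-out-neighbours into $w$'s own part. Here I would use the omg structure of $J_{j-1}=H'\Ra H''$: as $w$ can miss vertices of at most one of the two copies, $w$ is complete to one of them, say $H^+$, its non-neighbours lie in $H^-$, and $H^+\Ra w$. For such an $x$ one has $x^+\cap H^+=\emptyset$ together with an out-neighbour $v^*\in H^-$, and the idea is to reroute the directed triangle through the copy of $H$ that $w$ dominates: an appropriate arc $v^*\ra h$ together with $h\ra x$ places $x$ into a completer set $C_{ab}$ indexed by a pair $(a,b)$ of vertices of $J_{j-1}$, of which there are at most $|J_{j-1}|^2\le 4h^2$. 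Each such set is $H$-free, so this part contributes at most $4h^2c$; combined with the $\dic(X_{j-1})$ term coming from $w^+\cap W_{j-1}\subseteq X_{j-1}$ (bounded by Claim~\ref{claim:X}) and routine bookkeeping this yields the stated bound. The second bullet is handled dually: after splitting off $w^-\cap X_{j+1}$ (again bounded by Claim~\ref{claim:X}), each remaining $x\in w^-\cap(X_{j+2}\cup\dots\cup X_{n+1})$ has no out-neighbour in $J_{j+1}$ and is therefore dominated by a copy of $H$ contained in $J_{j+1}$, so it has an in-neighbour $v\in J_{j+1}$; one covers these $x$ by the completer sets $C_{wv}=v^+\cap w^-$ for the $v\in J_{j+1}$ dominated by $w$, rerouting through the copy of $J_{j+1}$ that $w$ dominates exactly as above.

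The main obstacle is precisely this rerouting step. Because $G$ is only an oriented complete multipartite graph, $w$ may be non-adjacent to jewel vertices, and the vertices of $A$ whose jewel-out-neighbours all lie in $w$'s part cannot be captured by a single directed triangle through $w$. Handling them cleanly forces one to use the ``complete to one copy'' property of the two copies $H',H''$ composing each jewel, together with a short case analysis on \emph{which} copy $w$ is complete to (and a degenerate subcase when $H$ has no arcs, so that an entire copy may sit inside one part). It is this two-step routing through pairs of jewel vertices that produces the quadratic $8ch^2$ term, and getting every vertex to be routable is the delicate part of the argument.
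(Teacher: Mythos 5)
Your overall strategy is the paper's: cover the set in question by boundedly many ``completer'' sets $C_{ab}=b^+\cap a^-$ for arcs $ab$ with both ends in a bounded set, each of which is $H$-free because $G$ is $\Delta(1,1,H)$-free, plus one or two layers $X_i$ absorbed by Claim~\ref{claim:X}. The first half of your first-bullet argument (covering the $x\in A$ that have an out-neighbour in $J_{j-1}$ adjacent to $w$ by the sets $w^+\cap v^-$, $v\in J_{j-1}$) is exactly right. But the rerouting step for the remaining vertices has a genuine gap. A remaining $x$ has a unique out-neighbour $u_0$ in $J_{j-1}$, namely the unique non-neighbour of $w$ there, so any triangle $a\ra b\ra x\ra a$ with $a\in J_{j-1}$ forces $a=u_0$, and you then need an arc $u_0\ra b$ with $b\ra x$ and $b$ in your bounded index set. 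If $u_0$ lies in the \emph{second} copy of $H$ in $J_{j-1}=H\Ra H$ --- for instance if $u_0$ is the sink of the jewel --- then $u_0$ has no out-neighbour inside $J_{j-1}$ at all (and it is non-adjacent to $w$), so no such $b$ exists and the ``two-step routing through pairs of jewel vertices'' cannot be carried out. Your second bullet has the same issue in the subcase where $w$ has a single out-neighbour in $J_{j+1}$ (though there the uncovered vertices happen to form a stable set, which would rescue that bullet with one extra observation).

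The missing idea, and the paper's fix, is to use \emph{two} consecutive jewels rather than trying to route inside one. Since $J_{j-2}\cup J_{j-1}$ induces a tournament and $G$ is an \omg, $w$ has at most one non-neighbour in $J_{j-2}\cup J_{j-1}$, hence there is an $s\in\{j-2,j-1\}$ with $J_s\Rightarrow w$; every $x\in w^+\cap(W_1\cup\dots\cup W_{j-3})$ has an out-neighbour in $J_s$ by Claim~\ref{claim:jewel-chain}, so the sets $v^-\cap A$, $v\in J_s$, cover $A$ with no case analysis, and the two layers $X_{j-2},X_{j-1}$ are discarded via Claim~\ref{claim:X}. (Incidentally, this is where the $8c\cdot h^2$ comes from: it is $2\times(4c\cdot h^2)$ from two applications of Claim~\ref{claim:X}, not from a quadratic number of completer sets.) The second bullet is likewise handled by fixing a single out-neighbour $v\in J_{j+1}$ of $w$, noting $v\ra B$, that $v^+\cap B\subseteq v^+\cap w^-$ is $H$-free, and that $B\setminus v^+$ is a stable set.
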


\begin{subproof}
Let $1 \leq j \leq n+1$ and let $w \in W_j$. 

We first prove the first  bullet. 
By definition of $W_j$, $w$ has no out-neighbor in any of the $J_i$ for $i\leq j-1$ and by Claim \ref{claim:X} $\dic(W_{j-2} \cup W_{j-1}) \leq  8c \cdot h^2 + 2c + 12h $. So it is enough to prove that $A= w^+ \cap \big(W_1 \cup \dots \cup W_{j-3}  \big)$ has  dichromatic number at most $2c \cdot h$. 
Again by definition of $W_j$ we have $J_{j-2}\rightarrow w$ and $J_{j-1} \rightarrow w$, and since $J_{j-2} \cup J_{j-1}$ induces a tournament and $G$ is $(K_1 + TT_2)$-free, $w$ has at most one non-neighbour in $J_{j-2} \cup J_{j-1}$. 
So there exists $s \in \{j-2, j-1\}$ such that $J_s \Rightarrow w$. 
For every $v \in J_s$, if $v^- \cap A$ contains a copy of $H$, then it would form, together with $v$ and $w$, a $\Delta(1, 1, H)$, a contradiction. So, for every $v \in J_s$, $v^- \cap A$ is $H$-free and is thus $c$-colourable. Finally, by claim \ref{claim:jewel-chain} every vertex in $A$ has an out-neighbour in $J_s$. So we get that $\dic(A) \leq 2c \cdot h$. 

We now prove the second bullet. 
If $j \geq n-1$, then by claim~\ref{claim:X} $\dic(X_n \cup X_{n+1}) \leq 8c \cdot h^2 + 2c + 12h$ and we are done. So we may assume that $j \leq n-2$. 
By claim~\ref{claim:X}, $\dic(X_{j+1}) \leq 4c \cdot h^2+ 
6h + c$.
Set $B = w^- \cap \big(X_{j+2} \cup \dots \cup X_{n+1} \big)$. 
By Claim \ref{claim:jewel-chain}, $w$ has an out-neighbour $v \in J_{j+1}$. For $i \geq j+2$, by definition of an $H$-jewel-chain, $v \rightarrow J_i$ and by definition of $W_i$, $v \rightarrow W_i$. So $v \rightarrow B$ and since $G$ is an \omg $B\setminus (v^+\cap B)$ is a stable set. Now, $v^+ \cap B$ is $H$-free, as otherwise $G$ would contain a $\Delta(1, H, 1)$. So $v^+ \cap B$ is $c$-colourable and thus $\dic(B) \leq c +1$ and thus $ \dic\big(w^- \cap (X_{j+1} \cup \dots \cup X_{n+1}) \big) \leq \dic(X_{j+1}) + c + 1 \leq 4c \cdot h^2 + 2c + 6h + 1$  by claim~\ref{claim:X}. 
\end{subproof}

By Claims~\ref{claim:X}, \ref{clm:bounded_dichromatic_neighbours_J} and ~\ref{clm:bounded_dichromatic_neighbours_W}, we can apply Lemma~\ref{lem:N+N-backedge} with  $r = 12c \cdot h^2 + 4c \cdot h + 3c + 18h$ to get $\dic(G) \leq 8r+4$. 
\end{proof}


\section{Some insights about $\Delta(1, 2, 2)$-free \omgs}\label{sec:122}

In~\cite{ARU18} Axenovich et al.\ tried to characterize patterns that must appear in every ordering of the vertices of graphs with large chromatic number. 
An (undirected) graph $G$ is (what we call) \emph{non-interlaced} if there exists an ordering $(x_1, \dots, x_n)$ on its vertices such that for every $i_1<i_2<i_3<i_4<i_5$, $\{x_{i_1}x_{i_3}, x_{i_3}x_{i_5}, x_{i_2}x_{i_4}\} \subsetneq E(G)$. See Figure~\ref{fig:non-interlaced}. They left as an open question whether non-interlaced graphs have bounded chromatic number or not. 
The goal of this section is to show that if $\Delta(1, 2, 2)$ is a hero in \omgs, then non-interlaced graphs have bounded chromatic number. See Theorem~\ref{thm:interlaced_hero}. 

 \begin{figure}[ht]
        \begin{center}
        \begin{tikzpicture}[scale=1.5]

        \node (v1) at (1,0) [vertex] {$x_{i_1}$};
        \node (v2) at (2,0) [vertex] {$x_{i_2}$};
        \node (v3) at (3,0) [vertex] {$x_{i_3}$};
        \node (v4) at (4,0) [vertex] {$x_{i_4}$};
        \node (v5) at (5,0) [vertex] {$x_{i_5}$};
        
        \draw[edge, bend right=38, black] (v3) to (v1);
        \draw[edge, bend right=38, black] (v5) to (v3);
        \draw[edge, bend right=38, black] (v4) to (v2);

        \end{tikzpicture}
        \end{center}
        \caption{\label{fig:non-interlaced} A graph is non-interlaced if there is an ordering of its vertices that avoids the above pattern as a subgraph.}
    \end{figure}
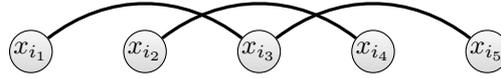

Given an \omg D together with an ordering $(V_1, \dots, V_n)$   on its parts, the arcs going from $V_i$ to $V_j$ are called  \emph{forward arcs} if $i<j$, and \emph{backward arcs} otherwise. Moreover, given $i<j$, we say that $u<v$ for every $u \in V_i$ and every $v \in V_j$. 
Finally, we say that an \omg $D$ is \emph{flat} if it admits an ordering $(V_1, \dots, V_n)$ on its parts such that for every vertex $v$ of $D$, the set of vertices $\{x\mid xv \text{ is a backward arc} \}$ is included in a single part of $D$, and the set of vertices $\{x\mid vx \text{ is a backward arc} \}$ is also included in a single part of $D$.  



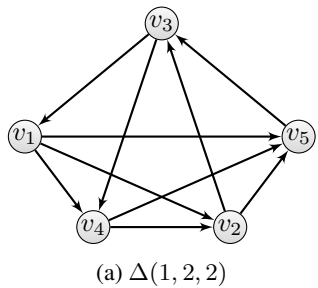
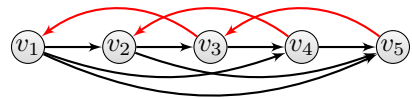
\begin{figure}
\centering
\begin{subfigure}{.5\textwidth}
  \centering
  \begin{tikzpicture}[scale=1.2] 
         \node (v3) at (2,2.25) [vertex] {$v_3$};
        \node (v1) at (0.5,1) [vertex] {$v_1$};
        \node (v4) at (1.25,0) [vertex] {$v_4$};
        \node (v2) at (2.75,0) [vertex] {$v_2$};
        \node (v5) at (3.5,1) [vertex] {$v_5$};
        
        \draw[arc] (v3) to (v1);
        \draw[arc] (v3) to (v4);
        
        \draw[arc] (v1) to (v5);
        \draw[arc] (v1) to (v2);
        \draw[arc] (v4) to (v2);
        \draw[arc] (v4) to (v5);
        
        \draw[arc] (v5) to (v3);
        \draw[arc] (v2) to (v3);
        
        \draw[arc] (v1) to (v4);
        \draw[arc] (v2) to (v5);
   \end{tikzpicture}
  \caption{$\Delta(1, 2, 2)$}
  \label{subfig:122}
\end{subfigure}%
\begin{subfigure}{.5\textwidth}
  \centering
  \begin{tikzpicture}[scale=1.2] 
          \node (v1) at (1,0) [vertex] {$v_1$};
        \node (v2) at (2,0) [vertex] {$v_2$};
        \node (v3) at (3,0) [vertex] {$v_3$};
        \node (v4) at (4,0) [vertex] {$v_4$};
        \node (v5) at (5,0) [vertex] {$v_5$};
        
        \draw[arc, bend right=38, red] (v3) to (v1);
        \draw[arc, bend right=38, red] (v5) to (v3);
        \draw[arc, bend right=38, red] (v4) to (v2);
        
        \draw[arc] (v3) to (v4);
        
        \draw[arc, bend left = -25] (v1) to (v5);
        \draw[arc] (v1) to (v2);
        \draw[arc] (v4) to (v5);
        
        \draw[arc] (v2) to (v3);
        
        \draw[arc, bend left = -20] (v1) to (v4);
        \draw[arc, bend left = -20] (v2) to (v5);
   \end{tikzpicture}
  \caption{A drawing of $\Delta(1, 2, 2)$ where the backward arcs (coloured in red) induce the forbidden pattern of non-interlaced graphs.}
  \label{subfig:122_line}
\end{subfigure}
\caption{Two drawings of $\Delta(1, 2, 2)$.}
\label{fig:122}
\end{figure}


\begin{lemma}\label{lem:flat_interlaced}
Let $D$ be an \omg with parts $V_1, \dots, V_n$ where $(V_1, \dots, V_n)$ is a flat ordering. If $D$ contains a copy of $\Delta(1, 2, 2)$, naming its vertices as in Figure~\ref{fig:122}, we must have $v_1 < v_2 < v_3 < v_4 < v_5$. 
\end{lemma}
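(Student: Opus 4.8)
The plan is to convert the flatness hypothesis into a purely combinatorial condition on how the part-order can rank the five vertices of the copy of $\Delta(1,2,2)$, and then to show that exactly one ranking is possible. Since $\Delta(1,2,2)$ is a tournament, its five vertices are pairwise adjacent and hence lie in five \emph{distinct} parts of $D$, so they receive five distinct ranks in the flat ordering. For each $v_i$, flatness says that all backward arcs leaving $v_i$ land in a single part; as the other four vertices occupy distinct parts, this forces $v_i$ to have \emph{at most one} out-neighbour among $\{v_1,\dots,v_5\}$ that precedes it, and symmetrically at most one in-neighbour that follows it. Writing $B$ for the set of arcs of the copy that point backward in the part-order, this says precisely that $B$ has maximum out-degree and maximum in-degree at most $1$. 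Moreover $B$ is a feedback arc set: deleting it leaves only forward arcs, which respect the linear part-order and are therefore acyclic. So the whole lemma reduces to classifying the feedback arc sets $B$ of $\Delta(1,2,2)$ with $\Delta^+(B)\le 1$ and $\Delta^-(B)\le 1$.

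The structural feature I would exploit is that, reading the arcs off Figure~\ref{fig:122}, every directed triangle of $\Delta(1,2,2)$ passes through the apex $v_3$: they are exactly $v_3v_1v_2$, $v_3v_1v_5$, $v_3v_4v_2$ and $v_3v_4v_5$ (the four triples inside $\{v_1,v_2,v_4,v_5\}$ are transitive). Consequently the out-arcs of $v_3$ in $B$ are drawn from $\{v_3v_1,\,v_3v_4\}$ and its in-arcs from $\{v_2v_3,\,v_5v_3\}$, and the degree bound permits at most one of each. First I would record which triangles each choice of $v_3$-incident arcs covers and argue that $B$ must contain \emph{exactly} one out-arc and one in-arc at $v_3$: taking none, or only a single $v_3$-incident arc, forces each remaining triangle to be hit by its unique non-$v_3$ arc, and this immediately produces a vertex of out- or in-degree $2$ in $B$ (at $v_1$, $v_2$, $v_4$ or $v_5$, e.g. the choice $\{v_3v_1\}$ forces $v_4v_2$ and $v_4v_5$, giving $v_4$ out-degree $2$).

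This leaves four candidate pairs of $v_3$-incident arcs; each covers three of the four triangles, and the uncovered triangle pins down one further arc, yielding four candidate sets of size three. Here is the one subtlety on which the argument turns: covering all \emph{triangles} is not enough, and three of the four candidates fail to be feedback arc sets because they leave a long directed cycle untouched. For instance $\{v_3v_1,\,v_2v_3,\,v_4v_5\}$ misses the $4$-cycle $v_2v_5v_3v_4$, $\{v_3v_4,\,v_5v_3,\,v_1v_2\}$ misses $v_3v_1v_4v_2$, and $\{v_3v_4,\,v_2v_3,\,v_1v_5\}$ misses the Hamiltonian cycle $v_3v_1v_4v_2v_5$. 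Only $B=\{v_3v_1,\,v_5v_3,\,v_4v_2\}$ is genuinely a feedback arc set, so the backward arcs must be exactly these three. Finally I would read the order off the complementary forward arcs, which contain the chain $v_1v_2,\,v_2v_3,\,v_3v_4,\,v_4v_5$; these comparabilities force the total order $v_1<v_2<v_3<v_4<v_5$, as claimed (the arc-reversing automorphism $v_1\!\leftrightarrow\!v_5$, $v_2\!\leftrightarrow\!v_4$ fixing $v_3$ can be used to halve the casework). I expect the main obstacle to be precisely this last wrinkle: remembering that the three near-miss candidates are discarded not by a degree count but by an uncovered $4$- or $5$-cycle, and organising the elimination cleanly rather than brute-forcing all $120$ orderings.
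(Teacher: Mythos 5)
Your reduction is sound up to and including the analysis at $v_3$: the five vertices lie in distinct parts, flatness does force the set $B$ of backward arcs of the copy to have maximum in- and out-degree at most $1$, $B$ is a feedback arc set, the four directed triangles all pass through $v_3$, and $B$ must contain exactly one of $v_3v_1,v_3v_4$, exactly one of $v_2v_3,v_5v_3$, and the arc forced by the uncovered triangle. The gap is in the elimination of the three bad candidates. You treat each candidate as a set of size \emph{exactly} three, but your hypotheses only give that $B$ \emph{contains} one of these triples; $B$ may contain further arcs subject to the degree bounds, and such an extra arc can hit the cycle you cite. Concretely, for the candidate $\{v_3v_4,\,v_2v_3,\,v_1v_5\}$ the Hamiltonian cycle $v_3\to v_1\to v_4\to v_2\to v_5\to v_3$ \emph{can} be covered by adding $v_4v_2$ to $B$: at that stage $v_4$ has out-degree $0$ and $v_2$ has in-degree $0$ in $B$, so no degree constraint is violated, and your stated reason does not close this case. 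It can be closed — e.g.\ the $4$-cycle $v_3\to v_1\to v_4\to v_5\to v_3$ must also be hit, its only degree-feasible arc is $v_4v_5$, and $v_4v_2,v_4v_5\in B$ gives $v_4$ out-degree $2$; alternatively, note that reversing the backward arcs must yield relations consistent with the part-order, and $v_3v_4,v_4v_2,v_2v_3$ all backward gives $v_4<v_3<v_2<v_4$ — but some such extra step is genuinely needed. The same superset issue formally affects the other two eliminations; there it happens that no arc of the cited $4$-cycle can be added to $B$ without violating a degree bound, so those cases do close, but this should be checked and said explicitly rather than inferred from the non-extended triple failing to be a feedback arc set.

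Independently of the repair, note that the paper's own proof is much shorter and avoids feedback arc sets entirely: every in-arc of the minimum of the five vertices and every out-arc of the maximum is backward, so flatness forces the minimum to have in-degree at most $1$ and the maximum to have out-degree at most $1$ inside $\Delta(1,2,2)$; only $v_1$ and $v_5$ qualify, respectively. Then $v_3<v_2$ would make $v_2v_3$ and $v_5v_3$ two backward in-arcs of $v_3$ from distinct parts, and $v_4<v_3$ fails by a symmetric argument on the backward out-arcs of $v_3$, which yields $v_1<v_2<v_3<v_4<v_5$ in four lines. Your route is viable and self-contained once the case analysis is tightened, but it buys nothing over the direct degree argument here.
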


\begin{proof}
    Suppose that $D$ contains a copy of $\Delta(1, 2, 2)$ and name its vertices as in Figure~\ref{fig:122}. 
    Since $\Delta(1, 2, 2)$ is a tournament, $v_i$'s are contained in pairwise distinct parts of $D$, and thus are totally ordered. 
    Since $(V_1, \dots, V_n)$ is a flat ordering, the smallest vertex among $\{v_1, v_2, v_3, v_4, v_5\}$ must have in-degree at most $1$ in $\Delta(1,2,2)$, and hence must be $v_1$. 
    Similarly, since $v_5$ is the only vertex with out-degree $1$ in $\Delta(1, 2, 2)$, $v_5$ must be the largest of the $v_i$.  
    If $v_3 < v_2$, then $v_3 < v_2 < v_5$ and the arcs $v_2v_3$ and $v_5v_3$ contradicts the fact that $(V_1, \dots, V_n)$ is a flat ordering, so $v_2 < v_3$. Similarly, if $v_4 < v_3$, then $v_4 < v_3 < v_5$ and the arcs $v_3v_4$ and $v_5v_3$ contradicts the fact that $(V_1, \dots, V_n)$ is a flat ordering, so  $v_3 < v_4$ and thus  $v_1 < v_2 < v_3 < v_4 < v_5$. 
\end{proof}

\begin{theorem}\label{thm:interlaced_hero}
If $\Delta(1, 2, 2)$ is a hero in \omgs, then every non-interlaced graph has bounded chromatic number.
\end{theorem}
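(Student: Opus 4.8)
The plan is to prove the contrapositive: assuming there exist non-interlaced graphs of arbitrarily large chromatic number, I will build $\Delta(1,2,2)$-free \omgs of arbitrarily large dichromatic number, so that $\Delta(1,2,2)$ cannot be a hero in \omgs. Concretely, starting from a non-interlaced graph $G$ with a good ordering $(x_1, \dots, x_n)$ (one avoiding the pattern of Figure~\ref{fig:non-interlaced}) and large $\chi(G)$, I will construct a flat \omg $D = D(G)$, with a flat ordering inherited from the ordering of $G$, whose backward arcs encode the edges of $G$ and whose forward arcs are arranged so that $\dic(D)$ grows with $\chi(G)$. The construction is modelled on $D_s$ from Section~\ref{sec:counter_exemple}: the vertices of $D$ are increasing tuples of vertices of $G$ (so that forward arcs record a ``consecutive overlap'' relation), the parts are indexed by a middle coordinate, and non-consecutive but adjacent vertices are joined by backward arcs.

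The $\Delta(1,2,2)$-freeness of $D$ is where Lemma~\ref{lem:flat_interlaced} does the work. Since $D$ is a flat \omg and $\Delta(1,2,2)$ is a tournament, any copy of $\Delta(1,2,2)$ in $D$ must be increasing, i.e.\ $v_1 < v_2 < v_3 < v_4 < v_5$ in the flat ordering. But then, as drawn in Figure~\ref{subfig:122_line}, its three backward arcs are exactly $v_3v_1$, $v_5v_3$ and $v_4v_2$, whose underlying edges form the interlacing pattern of Figure~\ref{fig:non-interlaced}. I will set up the construction so that a triple of backward arcs in this configuration forces an occurrence of that pattern in $G$ under its good ordering, contradicting the choice of $G$. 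Hence $D$ contains no $\Delta(1,2,2)$.

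For the lower bound on $\dic(D)$ I will mirror the proof of Lemma~\ref{lem:Ds_dic}. Let $F$ be the underlying graph of the forward arcs of $D$. The construction should guarantee that every forward $\ora P_3$ of $D$ is closed into a directed triangle, because the two (non-consecutive, hence non-forward-adjacent) endpoints of such a path lie in distinct parts and are therefore joined by a backward arc. Consequently, inside any acyclic induced subgraph $R$ of $D$, the forward arcs contain no $\ora P_3$, so they form a disjoint union of in- and out-stars, and $V(R)$ splits into two sets that are stable in $F$. Thus a $t$-dicolouring of $D$ yields a $2t$-colouring of $F$, giving $\dic(D) \geq \tfrac12 \chi(F)$. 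It then remains to check, using Lemma~\ref{lem:line_graph} exactly as for $D_s$, that $\chi(F)$ is an unbounded function of $\chi(G)$; combined with $\dic(D) \geq \tfrac12 \chi(F)$ this makes $\dic(D)$ unbounded over the family, completing the contradiction and hence the proof.

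The main obstacle is to realise all three requirements on a single object: the backward arcs must reproduce the edges of $G$ faithfully enough that an interlacing triple of backward arcs in $D$ certifies the forbidden pattern in $G$ (for $\Delta(1,2,2)$-freeness), while simultaneously the forward arcs must have large chromatic number and every forward $\ora P_3$ must close into a triangle (for the dichromatic lower bound). On the plain vertex set $V(G)$ these demands conflict, since forcing forward $\ora P_3$'s to close would impose a strong closure condition on $G$ incompatible with it being an arbitrary non-interlaced graph; this is precisely why the vertices of $D$ must be taken to be increasing tuples of $G$, as in $D_s$. The technical heart of the argument is therefore threefold: verifying that this tuple construction admits a flat ordering, translating the non-interlacedness of $G$ into the absence of an increasing $\Delta(1,2,2)$ in $D$, and transferring the chromatic number of $G$ to that of $F$ through the line-graph inequality of Lemma~\ref{lem:line_graph}.
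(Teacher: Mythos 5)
Your overall plan --- prove the contrapositive by building, from a non-interlaced graph $G$ of large chromatic number, a flat $\Delta(1,2,2)$-free \omg of large dichromatic number in the style of $D_s$ --- runs into an obstruction that you acknowledge but do not actually resolve. The two properties you need of the construction are mutually incompatible in the $D_s$ template. For the dichromatic lower bound you need every forward $\ora P_3$ to close into a \emph{directed} triangle, i.e.\ the two endpoints of any forward $\ora P_3$ must be joined by a \emph{backward} arc; in $D_s$ this holds because \emph{every} non-overlap pair in distinct parts is made backward. But then a vertex $(v_i,v_j,v_k)\in V_j$ has backward out-arcs into every part $V_{j'}$ with $j'<j$ (all of $V_{j'}$ except its few forward in-neighbours), so the ordering by middle coordinate is not flat and Lemma~\ref{lem:flat_interlaced} does not apply; worse, backward arcs no longer certify edges of $G$, so non-interlacedness yields no contradiction (and indeed $D_s$ is only shown to exclude $\Delta(1,2,C_3)$ and $\Delta(1,2,3)$, not $\Delta(1,2,2)$, whose feedback arc sets \emph{can} be disjoint unions of directed paths, e.g.\ $\{xy_1,\, y_2z_1,\, z_2x\}$). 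If instead you make backward only those arcs that encode genuine edges of $G$ --- which is what flatness and the interlacing argument require --- then the endpoints of a forward $\ora P_3$ on triples $(x_{i_1},x_{i_2},x_{i_3}),(x_{i_2},x_{i_3},x_{i_4}),(x_{i_3},x_{i_4},x_{i_5})$ lie in the distinct parts $V_{i_2}$ and $V_{i_4}$ but $x_{i_2}x_{i_4}$ need not be an edge of $G$, so the closing arc is forward, the triangle is transitive, and the star-decomposition argument of Lemma~\ref{lem:Ds_dic} collapses. Your proposal never specifies how these remaining arcs are oriented, and no choice satisfies both requirements; this unspecified construction is the heart of the proof rather than a detail.

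The paper sidesteps this entirely by arguing in the direct direction. It blows each $x_i$ up into an $n\times n$ stable set so that the backward arcs are sparse enough to give a flat ordering while still faithfully encoding $E(G)$, orients everything else forward, and inserts between consecutive parts copies of an extremal flat $\Delta(1,2,2)$-free \omg $R$ --- whose existence uses the hypothesis that $\Delta(1,2,2)$ is a hero and which is therefore unavailable in your contrapositive setting. An $r$-dicolouring of the resulting digraph then yields a $2^{2^{r}}$-colouring of $G$ directly, because a monochromatic backward arc would combine with a same-coloured vertex of the interposed copy of $R$ into a monochromatic directed triangle; no lower bound on dichromatic number via Lemma~\ref{lem:line_graph} is ever needed.
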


\begin{proof}

Assume that $\Delta(1, 2, 2)$ is a hero in \omgs. 
Let $\mc F$ be the class of flat $\Delta(1, 2, 2)$-free \omgs. 
Since $\Delta(1, 2, 2)$ is a hero in \omgs, there exists a constant $r$ such that every digraph in $\mc F$ has dichromatic number at most $r$. Let $R \in \mc F$ such that $\dic(R) = r$ and recall that $R$ has a flat ordering. We are going to prove that every non-interlaced graph has chromatic number at most $2^{2^r}$.

Let $G$ be a non-interlaced (undirected) graph and $(x_1, \dots, x_n)$ the ordering on $V(G)$ given by the definition of non-interlaced graphs (that is an ordering that avoids the pattern in Figure~\ref{fig:non-interlaced}). We construct an \omg  $D'(G)$ as follow. 
 For each $x_i$, we create a stable set $V_i$ in $D'(G)$ of size $n^2$ and we assume the vertices of $V_i$ are organised as an $n \times n$ matrix. The parts of $D'(G)$ are $V_1, \dots, V_n$.  Let us now explain how we orient the arcs.  
Given $i<j$, if $x_ix_j \in E(G)$, we orient the arcs from each vertex of the $i^{th}$ line of $V_j$ to each vertex of the $j^{th}$ column of $V_i$.  Every other arc is oriented from $V_i$ to $V_j$. This completes the construction of $D'(G)$.

Let us now prove that the ordering $(V_1, \dots, V_n)$ of $D'(G)$ is flat. 
Let $v$ be any vertex of $D'(G)$ and assume $v \in V_j$ and is in the $i^{th}$ line and the $k^{th}$ column. By definition of $D'(G)$, if $v$ is the tail of some backward arcs $vw$, then $w$  belongs to the $j^{th}$ column of $V_i$ (in particular $i<j$). So all such $w$ belong to the same part. Similarly, if $uv$ is a backward arc, then $u$  belongs to the $j^{th}$-line of $V_k$ ($j<k$). This proves that $(V_1,  \dots, V_n)$ is a flat ordering of $D'(G)$.


We now construct another \omg $D(G)$ from $D'(G)$ by introducing, for $j=1, \dots, n-1$, a copy $R_j$ of $R$ between $V_j$ and $V_{j+1}$ such that $\cup_{i\leq j}V_i \Ra V(R_j)$, $V(R_j) \Ra \cup_{k \geq j+1}V_k$, and $V(R_j) \Ra \cup_{i\geq j+1}V(R_i)$ . This completes the construction of $D(G)$.

It is clear that $D(G)$ is an \omg and by inserting the flat ordering of each copy of $R$ between each consecutive $V_j$, we get a natural ordering of the parts of  $D(G)$. 
In the rest of the proof, we speak about backward and forward arcs of $D(G)$ with respect to this ordering. 

We are going to prove that $D(G) \in \mc F$ (so $\dic(D(G)) \leq r$) and that $\chi(G) \leq 2^{2^{\dic(D(G))}}$, which together imply the result. 

In order to help in our analysis, we will say that the vertices of $D(G)$ that comes from $D'(G)$ are green. 

The following claim is straightforward by construction. 
\begin{claim}\label{clm:backward_flat}
If $uv$ is a backward arc of $D(G)$, then either both $u$ and $v$ are green, or $u$ and $v$ are both contained in one of the copies of $R$.
\end{claim}

\begin{claim}\label{clm:122-interlaced}
If $v_1, v_2, v_3, v_4, v_5$ are vertices of $D'(G)$ such that $v_1 < v_2 < v_3 < v_4 < v_5$, then $\{v_3v_1, v_5v_3, v_4v_2\} \subsetneq A(D'(G))$. 
\end{claim}
\begin{subproof}
For otherwise $\{x_1x_3, x_3x_5,x_2x_4\} \subseteq E(G)$, a contradiction. 
\end{subproof}

Let us first prove that $D(G) \in \mc F$. 
By claim~\ref{clm:backward_flat}, $D(G)$ is flat and the ordering we consider is a flat ordering. Assume that $D(G)$ contains a copy of $\Delta(1, 2, 2)$ and name its vertices as in Figure~\ref{fig:122}. 
By Lemma~\ref{lem:flat_interlaced}, we have that the $v_i$ are in pairwise distinct parts of $D(G)$ and $v_1 < v_2 < v_3 < v_4 < v_5$.  
If $v_{3}$ is in a copy of $R$, since $v_3v_{1}$ and $v_{5}v_3$ are backward arcs of $D(G)$, we get by claim~\ref{clm:backward_flat} that $v_1$ and $v_5$ are in the same copy of $R$ as $v_3$. By construction, since $v_1<v_2<v_3<v_4<v_5$, we get that $v_2$ and $v_4$ are also in this same copy of $R$, a contradiction with the fact that $R$ is $\Delta(1, 2, 2)$-free. 
So we may assume that $v_{3}$ is green, and so are $v_{1}$ and $v_{5}$ by claim~\ref{clm:backward_flat}. Now, if $v_2$ is in a copy of $R$, then by claim~\ref{clm:backward_flat} $v_4$ is in the same copy of $R$, and since $v_2 < v_3 < v_4$,   $v_3$ must be in that same copy of $R$, a contradiction with the fact that $v_3$ is green.
Hence, $v_2$ is green and by claim \ref{clm:backward_flat} so is $v_4$. Thus, every $v_i$ is green, a contradiction to claim~\ref{clm:122-interlaced}.
This proves that $D(G) \in \mc F$. 
\smallskip

Since  $D(G)$ contains copies of $R$, it has dichromatic number at least $r$, and since $D(G) \in \mc F$, we get that $\dic(D(G)) = r$.  
 Consider a dicolouring $\ora \varphi$ of $D(G)$ with $r$ colours.  
We define a coloring $\varphi$ of $V(G)$ from $\ora \varphi$ as follows: for $i=1, \dots, n$, $\varphi(v_i)$ is the set of sets of colours used by each line of $V_i$. 
This gives us a colouring of $V(G)$ with at most $2^{2^r}$ colours. Let us prove that it is a proper colouring of $G$, that is, each colour class is an independent set. 

Assume for contradiction that there exists  $x_ix_j \in E(G)$ such that $\varphi(x_i) = \varphi(x_j)$ and assume without loss of generality that $i<j$. Let us first prove that $D(G)$ has a monochromatic backward arc. 
Consider the set of colours used in the $i^{th}$ line of $V_j$. The same set of colours is used by the vertices of some line of $V_i$, say the $k^{th}$. 
Now, there is an arc from each vertex of the $i^{th}$ line of $V_j$ to the  $j^{th}$ vertex of the $k^{th}$ line of $V_i$,
which implies the existence of a monochromatic backward arc as announced.   
Let $uv$ be this monochromatic backward arc, with $v \in V_i$ and $u \in V_j$. Since $i<j$, there is a copy of $R$ between $V_i$ and $V_j$. Since $\dic(R) = r$, one of the vertices $x$ of $R$ is coloured with $\ora \varphi(u)$. By construction of $D(G)$, $ux$ and $xv$ are arcs of $D(G)$ and thus $\{u,x, v\}$ induces a monochromatic directed triangle, a contradiction.   
\end{proof}

\section{Related and further works} \label{sec:further_works}

Heroes in orientations of chordal graphs was recently fully characterized in~\cite{AAS22}. \smallskip

A \emph{star} is an undirected tree with at most one non-leaf vertex. 
An \emph{oriented forest} (resp. \emph{oriented star}) is an orientation of a forest (resp. of a star). 
In~\cite{ACN21}, the authors initiated a systematic study of heroes in $\F(F)$ for a fixed digraph $F$. 
We now summarize the known results in this direction and explain how our results fit in the big picture. 

First observe that $K_1$ and $TT_2$ are heroes in every class of digraphs. 
A result in~\cite{HM12} implies that no digraph except for $K_1$ and $TT_2$ is a hero in $\F(F)$ whenever the underlying graph of $F$ contains a cycle. 
We now distinguish cases depending on whether $F$ is an oriented forest, an oriented star or a disjoint union of at least two oriented stars. 

\subsection{Heroes in $\F(F)$ when $F$ is an oriented forest}

It is  proved in~\cite{ACN21} that if $F$ is not a disjoint union of oriented stars, then the only possible heroes in $\F(F)$ are transitive tournaments. 
In the same paper the authors venture to conjecture the following (which can be seen as an oriented analogue of the well-known Gy\'arf\'as-Sumner  conjecture~\cite{G75, S81}): 

\begin{conjecture}[\cite{ACN21}]
For every oriented forest $F$, every transitive tournament is a hero in $\F(F)$.
\end{conjecture}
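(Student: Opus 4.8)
The plan is to treat this as the dichromatic analogue of the Gy\'arf\'as--Sumner conjecture and proceed by a double induction, on $|V(F)|$ and on $k$, where $TT_k$ is the target transitive tournament. The one structural gift we get for free is that $TT_k$-freeness bounds the underlying clique number: any clique of the underlying graph of a digraph $G$ is a subtournament of $G$, and every tournament on at least $2^{k-1}$ vertices contains $TT_k$, so an $F$-free, $TT_k$-free digraph has underlying clique number less than $2^{k-1}$. We are therefore always working inside a class of bounded underlying clique number, and the entire content of the conjecture is that --- despite bounded clique number controlling neither the chromatic number of the underlying graph nor a priori the dichromatic number --- forbidding the single oriented forest $F$ suffices to bound $\dic$. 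The base cases are immediate: $TT_1=K_1$, and $TT_2$-freeness forces an edgeless digraph, while $|V(F)|\le 1$ is vacuous. A useful prototype to keep in mind is the directed path $\ora{P_t}$, which is an easy hero (layering vertices by the length of a longest directed path reaching them already gives a good bound), suggesting the general mechanism should be a directed-layering argument.

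First I would reduce to the case where $F$ is a single oriented tree. In the undirected setting the passage from forests to trees is a theorem in its own right, and I would try to adapt its proof: given a $(T_1+T_2)$-free, $TT_k$-free digraph $G$ of large dichromatic number that is not already $T_1$-free, one seeks a copy of $T_1$ together with a copy of $T_2$ that is disjoint from and non-adjacent to it, producing the forbidden $T_1+T_2$. The honest difficulty is that, unlike chromatic number, dichromatic number does not drop in a controlled way when one deletes the closed neighbourhood of a copy of $T_1$, so this reduction is not routine here and must be engineered so that one keeps a high-$\dic$ region anticomplete to some copy of $T_1$.

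Granting the tree case, the inductive step on $|V(F)|$ removes a leaf $\ell$ of $F$ with parent $p$, giving a smaller tree $F'$ for which $TT_k$ is a hero in $\F(F')$ by induction. The scheme is to take an $F$-free, $TT_k$-free $G$ of large dichromatic number, extract an induced copy of $F'$ (with $p$ realized by a vertex $\hat p$), and extend it to $F$ by attaching a new leaf $z$ joined to $\hat p$ by an arc in the prescribed direction and non-adjacent to the rest of the copy; that is, one needs $z\in \hat p^{+}\cap\bigcap_{w} w^{o}$, the intersection running over the other vertices $w$ of the copy. This is exactly where the induction on $k$ should pay off: whenever $\hat p$ and $w$ are adjacent, the common out-neighbourhood $\hat p^{+}\cap w^{+}$ cannot contain $TT_{k-2}$, since $\{\hat p,w\}$ together with such a transitive set would already assemble a $TT_k$; being $TT_{k-2}$-free and $F$-free, that set has bounded dichromatic number by the induction on $k$ and can be safely discarded. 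The composition machinery of Section~\ref{subsec:strong}, and in particular the general Theorem~\ref{thm:strongg} relating heroes in $\F(F)$ and $\F(K_1+F)$, is the natural engine for gluing, since $TT_k=K_1\Ra TT_{k-1}$.

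The step I expect to be the main obstacle is precisely the clean extension of a single copy of $F'$ to $F$ for a general oriented tree. The $TT$-building argument above controls the transitive parts of the relevant neighbourhoods, but it says nothing about the ``cyclic'' parts, such as $\hat p^{+}\cap w^{-}$ when $w\ra\hat p$ (which only forces directed triangles, not transitive tournaments), and, more seriously, it gives no leverage on vertices $w$ that are non-adjacent to $\hat p$ --- and in a sparse tree almost every vertex is far from $p$, so the sets $w^{+}\cup w^{-}$ to be avoided inside $\hat p^{+}$ can themselves carry large dichromatic number. There is no bounded-depth layering available for arbitrary trees to circumvent this, as there is for $\ora{P_t}$, and one must simultaneously control arc orientations during the embedding, a feature with no undirected counterpart. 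Since the conjecture subsumes oriented analogues of cases of Gy\'arf\'as--Sumner that are open even for undirected trees, I expect a full proof to require genuinely new structural input beyond the composition lemmas available here; the realistic short-term target is to settle the conjecture for oriented trees of bounded radius --- oriented stars and spiders --- where the neighbourhood-Ramsey argument, combined with the bounded-independence-number results reproved via Theorem~\ref{thm:strongg}, can plausibly be made to close.
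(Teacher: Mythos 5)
The statement you set out to prove is presented in the paper as an open conjecture (attributed to~\cite{ACN21}); the paper contains no proof of it, only the remarks that Steiner~\cite{S21} reduced it to the case of oriented trees and that the case of oriented stars was settled in~\cite{CS19}. So there is no proof in the paper to compare yours against, and your text is in any case not a proof: you yourself flag that neither the forest-to-tree reduction nor the leaf-extension step closes, and these are precisely the points where the problem is genuinely open. Concretely, in the leaf-extension step, for a vertex $w$ of the embedded copy of $F'$ that must end up non-adjacent to the new leaf $z$, the set $w^{+}\cup w^{-}$ you need to avoid inside $\hat p^{+}$ is controlled neither by $TT_{k-2}$-freeness nor by the induction on $|V(F)|$, and can carry arbitrarily large dichromatic number; nothing in the paper's machinery (Theorem~\ref{thm:strongg} or Lemma~\ref{lem:backedge}) bears on this, since those tools handle composition along complete or jewel-chain structures, not anticomplete extension.

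One concrete error in your set-up is also worth flagging: the ``prototype'' claim that $TT_k$ is easily a hero in $\F(\ora{P_t})$ by layering vertices along longest directed paths conflates subgraph containment with \emph{induced} subgraph containment. The class $\F(\ora{P_3})$ contains every tournament (no three vertices of a tournament induce a directed path), so a digraph with no induced $\ora{P_t}$ can contain arbitrarily long directed paths and the Gallai--Roy-type layering yields nothing; that transitive tournaments (indeed all heroes in tournaments) are heroes in $\F(\ora{P_3})$ is the content of the paper's Theorem~\ref{thm:quasi_main}, proved via the Bang-Jensen--Huang substitution decomposition rather than by layering. The honest summary is that the conjecture remains open beyond oriented stars and the reduction to trees, and your sketch correctly identifies, but does not overcome, the central obstruction.
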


In~\cite{S21} it is proved that it is enough to prove the conjecture for trees, the conjecture have been proved to be true for oriented stars~\cite{CS19}. 

\subsection{Heroes in $\F(F)$ when $F$ is an oriented star}

When $F$ is an oriented star, it is still possible that heroes in $\F(F)$ are the same as heroes in tournaments.  As said in the previous subsection, it is proved in~\cite{CS19} that for every oriented star $F$, all transitive tournaments are heroes in $\F(F)$.  The only other known result so far is  concerned with $\ora K_{1,2}$ (the oriented star on $3$ vertices, with one vertex of out-degree $2$ and two vertices of in-degree $1$): it is proved in~\cite{AAC21, S21} that $K_1 \Ra C_3$ (and thus $C_3$ too) is a hero in $\F(\ora K_{1,2})$. Note that $\ora P_3$ is an oriented star. 
We now give an easy proof that all heroes in tournaments are heroes in $\F(\ora P_3)$. 
\smallskip

Recall that a digraph $G$ is  \emph{quasi-transitive} if for every triple of vertices $x,y,z$, if $xy,yz \in A(G)$, then $xz \in A(G)$ or $zx \in A(G)$ and observe that the class of quasi-transitive digraphs is precisely $\F(\ora P_3)$. 

Given two digraphs $G_1$ and $H_1$ with disjoint vertex sets, a vertex $u \in G_1$, and a digraph $G$, we say that $G$ is obtained by substituting $H_1$ for $u$ in $G_1$, and write $G_1(u \la H_1)$ to denote $G$, 
provided that the following hold: 
\begin{itemize}
    \item $V(G) = (V(G_1) \setminus u) \cup V(H_1)$,
    \item $G[V(G_1) \setminus u] = G_1 \setminus u$,
    \item $G[V(H_1)] = H_1$
    \item for all $v \in  V(G_1) \setminus u$ if $vu \in A(G_1)$ (resp. $uv \in A(G_1)$, resp. $u$ and $v$ are non-adjacent in $G_1$), then $V(H_1) \Ra v$  (resp. $v \Ra V(H_1)$, resp. $V(H_1) \subseteq v^o$) in $G$. 
\end{itemize}
Let $\mc T$ be the class of tournaments and $\mc A$ the class of acyclic  digraphs. Let $(\mc A \cup \mc T)^*$ be the closure of $\mc A \cup \mc T$ under taking substitution, that is to say digraphs in $(\mc A \cup \mc T)^*$ are the digraphs obtained from a vertex by repeatedly substituting vertices by digraphs in $\mc A \cup \mc T$.
A classic result of Bang-Jensen and Huang~\cite{BH95} (see also Proposition 8.3.5 in~\cite{BG18}), implies that quasi-transitive digraphs are all in $(\mc A \cup \mc T)^*$.

\begin{theorem}
Heroes in $(\mc A \cup \mc T)^*$ are the same as heroes in tournaments. In particular, heroes in $\F(\ora P_3)$ are the same as heroes in tournaments. 
\end{theorem}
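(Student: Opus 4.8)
The plan is to prove the statement by induction on the number of substitutions needed to build a digraph in $(\mc A \cup \mc T)^*$, reducing the problem of heroes in this closure to the already-known characterization of heroes in tournaments (Theorem~\ref{thm:heroes}). Since $(\mc A \cup \mc T)^*$ contains all tournaments, any hero in this class is automatically a hero in tournaments, so only the converse direction needs work: I must show that every hero $H$ in tournaments remains a hero in $(\mc A \cup \mc T)^*$. The second sentence of the theorem is then immediate from the Bang-Jensen–Huang result, since $\F(\ora P_3) \subseteq (\mc A \cup \mc T)^*$.

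First I would fix a hero $H$ in tournaments and let $c$ be the bound on the dichromatic number of $H$-free tournaments. The key structural observation is that a digraph $G \in (\mc A \cup \mc T)^*$ has a top-level decomposition: $G$ is obtained by substituting digraphs $G_1, \dots, G_m$ (each itself in $(\mc A \cup \mc T)^*$) into the vertices of a single \emph{base} digraph $B$ that is either a tournament or acyclic. I would treat the two cases for $B$ separately. When $B$ is acyclic, $G$ decomposes along an acyclic ordering of the blocks $G_1, \dots, G_m$, so a dicolouring of $G$ can be assembled from dicolourings of the individual blocks with only a constant overhead—the acyclic arcs between distinct blocks never create new directed cycles, so $\dic(G) = \max_i \dic(G_i)$. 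When $B$ is a tournament, I would exploit that $H$ is a hero in tournaments together with the recursive structure from Theorem~\ref{thm:heroes}: the strong components of $H$ are themselves heroes, and a hero that is strongly connected is of the form $\Delta(1,k,H')$ or $\Delta(1,H',k)$ (or $K_1$), which lets me push the bound through substitution using the same splitting ideas as in Section~\ref{subsec:strong}.

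The inductive step combines these two cases. Given $H$-free $G$ with top-level base $B$, I would argue that each block $G_i$ is $H$-free (any copy of $H$ in $G_i$ is a copy in $G$, since substitution preserves induced subgraphs inside a block) and hence has bounded dichromatic number by induction on the construction depth. The remaining work is to bound $\dic(G)$ in terms of the block bounds and the structure of $B$. For the tournament base I would contract each block $G_i$ to a single vertex, obtaining a tournament $B$; crucially, whether a copy of $H$ appears in $G$ that crosses several blocks is controlled by how $H$ itself decomposes under substitution. Since $H$ is a hero in tournaments, the tournament $B$ obtained by contraction is essentially $H$-free (or contains $H$ only in a controlled way), so $\dic(B) \leq c$, and a dicolouring of $G$ is obtained by combining a dicolouring of $B$ with the per-block dicolourings, multiplying the colour bounds.

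The main obstacle I anticipate is the interaction between copies of $H$ and the substitution structure: a copy of $H$ in $G$ need not live inside a single block, so I cannot naively conclude that the contracted tournament $B$ is $H$-free. The careful point is to use that $H$ is itself built by the operations of Theorem~\ref{thm:heroes} and to track, for each way $H$ could be distributed across blocks versus collapsed into the base, that forbidding $H$ in $G$ forces either $H$-freeness of $B$ or $H$-freeness of the blocks in a way that keeps all relevant dichromatic numbers bounded by functions of $c$ and $|V(H)|$. Handling this bookkeeping—essentially showing that substitution cannot create an induced $H$ unless one of the pieces already contains an induced subgraph forcing a bound—is where the real argument lies, and I would structure it as a short claim isolating exactly which induced subtournaments of the base $B$ can occur under the $H$-free hypothesis.
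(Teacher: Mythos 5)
Your reduction to the converse direction, the handling of an acyclic base (where the dichromatic numbers of the blocks simply take a maximum), and the observation that each block is $H$-free are all fine, but the tournament-base case --- which you yourself flag as ``where the real argument lies'' --- is not carried out, and the strategy you outline for it cannot work as stated. First, the difficulty you anticipate (a copy of $H$ straddling several blocks) is in fact not the obstacle: every hero in tournaments is itself a tournament (otherwise all tournaments would be $H$-free and heroism would fail trivially), and any induced copy of $H$ in the contracted base $B$ lifts to an induced copy in $G$ by picking one vertex from each of the corresponding blocks, so $B$ really is $H$-free and $\dic(B)\le c$ with no bookkeeping needed. The genuine obstacle is elsewhere: your plan combines a $c$-dicolouring of $B$ with dicolourings of the blocks by ``multiplying the colour bounds''. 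Such a product colouring is correct for a single level of substitution, but your induction is on construction depth, so each tournament level multiplies the bound by $c$; since members of $(\mc A \cup \mc T)^*$ have modular decompositions of unbounded depth, you only obtain bounds of the form $c^{\Theta(\mathrm{depth})}$, which is not a uniform bound over the $H$-free digraphs of the class. The theorem requires a single constant for the whole class.

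The paper avoids this blow-up entirely. It inducts on $|V(G)|$ and never contracts at a tournament level: if every digraph substituted in the construction sequence is a tournament, then $G$ is itself a tournament and $\dic(G)\le c$ outright; otherwise it isolates one acyclic substituted digraph with vertices $x_1,\dots,x_t$, whose images $X_1,\dots,X_t$ in $G$ form a module $X=\bigcup_i X_i$ with the property that no induced directed cycle can meet two distinct $X_i$'s, nor meet $X$ in more than one vertex while also leaving $X$. It then takes, by induction on the number of vertices, $c$-dicolourings of the smaller digraphs $G-(X\setminus X_i)$ and recolours every $X_i$ using only the colour set that the first of these dicolourings assigns to $X_1$. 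This colour-reuse keeps the bound at exactly $c$ at every step. To repair your argument you would need a similar device in place of the product colouring, or to restructure the induction so that tournament levels are absorbed into the base case as the paper does.
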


\begin{proof}
Let $H$ be a hero in tournaments and $c$ be the maximum dichromatic number of an $H$-free tournament. 
We prove by induction on the number of vertices that $H$-free digraphs in $(\mc A \cup \mc T)^*$ are also $c$-dicolourable. 
Let $G \in \mc (\mc A \cup \mc T)^*$ on $n \geq 2$ vertices and assume that all digraphs in $\mathcal (\mc A \cup \mc T)^*$ on at most $n-1$ vertices are $c$-dicolourable. 

There exist $G_1, \dots, G_s$, $H_1, \dots, H_{s-1}$ and vertices $v_1 \dots, v_{s-1}$ such that  the $G_i$'s and the $H_i$'s are  digraphs of $\mc A \cup \mc T$ with at least two vertices, $G_1 = K_1$, $G_s = G$, $v_i \in V(G_i)$ and for $i=1, \dots, s-1$, $G_{i+1} = G_{i}(v_{i} \la H_{i})$. 

If all $H_i$ are tournaments, then $G$ is a tournament and is thus $c$-dicolourable. 
So we may assume that there exists $1 \leq i \leq s-1$ such that $H_i$ is an acyclic digraph. 
Let $x_1, \dots, x_t$ be the vertices of $H_i$. 
There exist $t$ digraphs $X_1, \dots, X_t$ in $\mathcal (\mc A \cup \mc T)^*$  such that $G$ is obtained from $G_{i+1}$ by substituting $x_1$ by $X_1$, $x_2$ by $X_2$, $\dots,$ $x_t$ by $X_t$  and some vertices of $V(G_{i+1}) \setminus \{x_1, \dots, x_t\}$ by digraphs in $(\mc A \cup \mc T)^*$.  Note that the order in which  these substitutions are performed does not matter. 

Let $X = \cup_{1\leq i \leq  t} V(X_i)$. 
So $V(G) \sm X$ can be partitioned into $3$ sets $S^+$, $S^-$, $S^o$ such that for every $v \in X$, $S^+ \subseteq v^+$, $S^- \subseteq v^-$ and $S^o \subseteq v^o$. 

For $i=1, \dots, t$, let $D_i = G[G_i \setminus (X \setminus X_i)]$. 
By induction, the $D_i$'s are $c$-dicolourable. For $i=1, \dots, t$, let $\phi_i$ be a $c$-dicolouring of $D_i$. 
Assume without loss of generality that $|\phi_1(X_1)| \geq |\phi_i(X_i)|$ for $1 \leq i \leq t$. 
In particular $\dic(X_i) \leq |\phi_1(X_1)|$ for $i=1, \dots, t$. 
Extend $\phi_1$ to a $c$-dicolouring of $D$ by dicolouring each $X_i$ (independently) with colours from $\phi_1(X_1)$. We claim that this gives a $c$-dicolouring of $G$. 

Let $C$ be an induced directed cycle of $G$. If $C$ is included in $X$ or $V(G) \sm X$, then $C$ is not monochromatic. 
So we may assume that $C$ intersects both $V(G) \sm X$ and $X$. 
Since vertices in $X$ share the same neighborhood outside $X$ and $C$ is induced, $C$ must intersect $X$ on exactly one vertex, and this vertex can be chosen to be any vertex of $X$. In particular we may assume that it is in $X_1$. Hence $C$ is not monochromatic. 
\end{proof}

Note that the proof of the previous theorem actually works for the following stronger statement:
\begin{theorem}
Let $\mc C$ be a class of digraphs closed under taking substitution and let $(\mc A \cup \mc C)^*$ be the closure of $\mc A \cup \mc C$ under taking substitution. Then heroes in $(\mc A \cup \mc C)^*$ are the same as heroes in $\mc C$. 
\end{theorem}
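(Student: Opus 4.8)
The plan is to prove the two inclusions separately. One direction is immediate: since $\mc C \subseteq \mc A \cup \mc C \subseteq (\mc A \cup \mc C)^*$, every $H$-free digraph in $\mc C$ is in particular an $H$-free digraph in $(\mc A \cup \mc C)^*$, so any $H$ that is a hero in $(\mc A \cup \mc C)^*$ is automatically a hero in $\mc C$. The content lies in the converse: if $H$ is a hero in $\mc C$, then $H$ is a hero in $(\mc A \cup \mc C)^*$. Let $c$ be the maximum dichromatic number of an $H$-free digraph in $\mc C$ (finite, since $H$ is a hero in $\mc C$); I would prove by induction on $|V(G)|$ that every $H$-free $G \in (\mc A \cup \mc C)^*$ satisfies $\dic(G) \leq c$. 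This is exactly the statement of the previous theorem with $\mc T$ replaced by $\mc C$, and I would reuse its proof essentially verbatim, pointing out the only two places where properties of the base classes are invoked.

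Concretely, write $G = G_s$ as the result of substitutions $G_{i+1} = G_i(v_i \la H_i)$ starting from $G_1 = K_1$, with each $H_i \in \mc A \cup \mc C$ having at least two vertices. If every $H_i$ lies in $\mc C$, then because $\mc C$ is \emph{closed under substitution} (the first place the hypothesis on $\mc C$ enters, replacing ``a substitution of tournaments into a tournament is a tournament''), $G$ itself lies in $\mc C$; being $H$-free it is $c$-dicolourable by the definition of $c$. Otherwise some $H_i$ is acyclic, say with vertices $x_1, \dots, x_t$, and $G$ arises from $G_{i+1}$ by substituting a digraph $X_j \in (\mc A \cup \mc C)^*$ for each $x_j$ (and further digraphs for the remaining vertices). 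Setting $X = \bigcup_j V(X_j)$, the defining property of substitution makes all vertices of $X$ share the same neighbourhood outside $X$, which partitions $V(G) \sm X$ into $S^+$, $S^-$, $S^0$ according to whether a vertex sees all of $X$, is seen by all of $X$, or is non-adjacent to all of $X$.

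For each $j$ I would set $D_j$ to be the digraph obtained from $G$ by deleting every $X$-block except $X_j$. The key point is that deleting the vertices $x_{j'}$ (for $j' \neq j$) from the acyclic $H_i$ leaves an acyclic digraph, so $\mc A$ being \emph{closed under induced subgraphs} (the second and only other place a property of a base class is used, and one $\mc A$ enjoys unconditionally) gives $D_j \in (\mc A \cup \mc C)^*$; moreover $|V(D_j)| < |V(G)|$ since $t \geq 2$. By induction each $D_j$ has a $c$-dicolouring $\phi_j$. Choosing $\phi_1$ so that $|\phi_1(X_1)| = \max_j |\phi_j(X_j)|$ guarantees $\dic(G[X_j]) \leq |\phi_1(X_1)| \leq c$ for all $j$, so I can keep $\phi_1$ and independently recolour each remaining block $X_j$ using only colours of $\phi_1(X_1)$, obtaining a colouring $\phi$ of $G$ with at most $c$ colours.

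The main obstacle, and the only step needing genuine care, is verifying that $\phi$ is a dicolouring. Given an induced directed cycle $C$: if $C \subseteq X$ then, since the quotient $H_i$ is acyclic, $C$ lies inside a single block $X_j$ and is broken by the dicolouring of that block; if $C \cap X = \emptyset$ then $C \subseteq V(D_1)$ and is broken by $\phi_1$; and if $C$ meets both $X$ and $V(G) \sm X$, then because the vertices of $X$ have identical neighbourhoods outside $X$ and $C$ is induced (a shared outside neighbour on $C$ would, by uniformity, be adjacent to every vertex of $X \cap C$, forcing a chord unless $|X \cap C| = 1$), $C$ meets $X$ in exactly one vertex, which by symmetry of the blocks may be taken in $X_1$; hence $C \subseteq V(D_1)$ and $\phi_1$ again breaks it. This completes the induction, and together with the easy inclusion it shows that heroes in $(\mc A \cup \mc C)^*$ and in $\mc C$ coincide.
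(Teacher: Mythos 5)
Your proof is correct and follows essentially the same route as the paper, whose proof of this statement is precisely the observation that the argument for $(\mc A \cup \mc T)^*$ carries over verbatim; you correctly identify the only two points where properties of the base classes enter (closure of $\mc C$ under substitution, and heredity of acyclicity to keep $D_j$ in the class), and your verification of the dicolouring matches the paper's. The small amount of extra detail you supply (why a cycle inside $X$ stays in one block, why $|V(D_j)|<|V(G)|$) is consistent with, and slightly more explicit than, the original.
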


\subsection{Heroes in $\F(F)$ when $F$ is a disjoint union of at least two oriented stars}

When $F$ is a disjoint union of stars, the authors of~\cite{ACN21} conjectured that heroes in $\F(F)$ were the same as heroes in tournaments, and Theorem~\ref{thm:ce} disproves this conjecture (recall that $\F(K_1 + TT_2)$ is the class of \omgs). 

Since $\F(F_1) \subseteq \F(F_2)$ whenever $F_1$ is an induced subgraph of $F_2$, and given our knowledge on heroes in $\F(F)$ when $F$ is an oriented star, let us focus on disjoint union of stars where each connected component is $K_1$, $TT_2$ or $\ora{P}_3$. 

We denote by $\overbar{K}_t$ the digraph on $t$ vertices with no arc (this is a disjoint union of stars, where each connected component is $K_1$). Observe that $\F(\overbar{K}_2)$ is the class of tournaments. 
In~\cite{HLNT19}, it is proved that heroes in $\F(\overbar{K}_t)$ are the same as heroes in tournaments. 
The proof of this result is quite hard, and shows that knowing heroes in $\F(F)$  does not necessarily help in understanding heroes in $\F(K_1 + F)$. Even worse, it is clear that every digraph is a hero in $\F(K_1)$ and in $\F(TT_2)$, while our result shows that only very few digraphs are heroes in $\F(K_1 + TT_2)$. 

Theorem~\ref{thm:strongg} suggests that the heroes in $\F(K_1 + TT_2)$ could be the same as heroes in $\F(F)$ where $F = \overbar{K}_t + TT_2$ or $F = \overbar{K}_t + \ora{P}_3$.  
In order to prove it (up to the status of $\Delta(1,2,2)$), it would be enough to answer by the affirmative to the following question:

\begin{question}
Let $H$ and $F$ be digraphs such that $\Delta(1,1,H)$ is a hero in $\F(F)$ and $H$ is a hero in $\F(K_1 + F)$. Then $\Delta(1,1,H)$ is a hero in $\F(K_1 + F)$.
\end{question}
\bigskip

{\bf Acknowledgments:} 
This research was partially supported by ANR project DAGDigDec (JCJC)   ANR-21-CE48-0012 and by the group Casino/ENS Chair on Algorithmics and Machine Learning.

\end{document}